\documentclass[final]{siamart}

\usepackage{lipsum}
\usepackage{amsfonts}
\usepackage{graphicx}
\usepackage{epstopdf}
\usepackage{algorithmic}
\usepackage{amsmath,amsfonts,mathrsfs,amssymb}
\numberwithin{equation}{section}
\numberwithin{theorem}{section}

\def\R{\mathbb{R}}

\newtheorem{remark}{Remark}[section]

\newcommand{\be}{\begin{equation}}
\newcommand{\ee}{\end{equation}}
\newcommand{\ba}{\begin{array}}
\newcommand{\ea}{\end{array}}

\newcommand{\grad}{{\rm \; grad\;}}

\newcommand{\bea}{\begin{eqnarray*}}
\newcommand{\eea}{\end{eqnarray*}}
\newcommand{\bean}{\begin{eqnarray}}
\newcommand{\eean}{\end{eqnarray}}
\def\grad{\nabla}

\def\va{\varepsilon}

\def\cydot{\leavevmode\raise.4ex\hbox{.}}
\newcommand{\normmm}[1]{{\left\vert\kern-0.25ex\left\vert\kern-0.25ex\left\vert #1
    \right\vert\kern-0.25ex\right\vert\kern-0.25ex\right\vert}}

\ifpdf
  \DeclareGraphicsExtensions{.eps,.pdf,.png,.jpg}
\else
  \DeclareGraphicsExtensions{.eps}
\fi

\newcommand{\TheTitle}{Inverse conductivity problem with internal data}
\newcommand{\TheAuthors}{F. Triki, and T. Yin}

\headers{Conductivity}{\TheAuthors}

\title{{\TheTitle}\thanks{This work was supported in part by the grant ANR-17-CE40-0029 of the French National Research Agency ANR (project MultiOnde).}}

\author{
  Faouzi Triki\thanks{ Laboratoire Jean Kuntzmann, UMR CNRS 5224, Universit\'e Grenoble-Alpes, 700 Avenue  Centrale, 38401 Saint-Martin- d'H\`eres, France
    (\email{faouzi.triki@univ-grenoble-alpes.fr}).}
  \and
  Tao Yin\thanks{Department of Computing \& Mathematical Sciences, California Institute of Technology, 1200 East California Blvd., CA 91125, United States (\email{taoyin89@caltech.edu}).}
}

\usepackage{amsopn}

\ifpdf
\hypersetup{
  pdftitle={\TheTitle},
  pdfauthor={\TheAuthors}
}
\fi

\begin{document}

\maketitle

\begin{abstract}
  This paper concerns the reconstruction of a scalar coefficient of a second-order  elliptic equation in divergence form posed on a bounded domain from internal data. This theory finds applications in multi-wave imaging, greedy methods to approximate parameter-dependent elliptic problems, and image treatment with partial differential equations.  We first show that  the inverse problem for smooth coefficients can be rewritten as a linear transport equation. Assuming that the coefficient is known near the boundary, we study the 
  well-posedness of  associated transport  equation as well as its numerical resolution using  discontinuous Galerkin method. We propose a regularized transport equation  that allow us to derive rigorous convergence rates of the numerical method in terms of the order of the polynomial approximation as well as the regularization parameter. We finally provide  numerical examples for the inversion assuming a lower regularity of the coefficient, and using synthetic data.

\end{abstract}

\begin{keywords}
Inverse problems, multi-wave imaging, static transport equation, internal data, diffusion coefficient, stability estimates, regularization.
\end{keywords}

\begin{AMS}
35R30; 	65N21
\end{AMS}

\section{Introduction}
Let $\Omega$ be a $C^6$-smooth bounded domain of $\mathbb{R}^n$, $n=2,3$, with boundary $\Gamma$. Let $\nu(x)$ be the outward normal vector at $x\in \Gamma, $ and \( d =\sup_{x, y\in \Omega} \|x-y\|\) be the diameter of $\Omega$. We set, for $\eta\in (0, d) $, $\sigma_0 \in  W^{2,\infty}(\Omega )$, $\Omega_\eta =\{x \in \Omega, \textrm{dist}(x, \Gamma)> \eta\}, $  and $0<k_1<k_2,$
\bea
\Sigma =\{ \sigma \in W^{2,\infty}(\Omega );\; \sigma= \sigma_0\, \textrm{ in } \Omega\setminus\overline{\Omega_\eta}, \;  k_1\le \sigma,\;
\|\sigma\|_{W^{2,\infty}(\Omega )}\le k_2\}.
\eea
Let $g$ be fixed in $H^{\frac{7}{2}}(\Gamma)$, and satisfy $\int_\Gamma g dx = 0$.
Then, according to the classical elliptic regularity theory
\bean \label{cond}
\mathrm{div}(\sigma \nabla u)=0\; \mathrm{in}\; \Omega,\;\;\;\; \; \sigma\partial_\nu u =g \; \mathrm{on}\; \Gamma,\;\;\int_\Omega u_\sigma dx = 0,
\eean
has a unique solution $u_\sigma\in H^5(\Omega )$,  and there exists a constant $c= c(\Sigma, \Omega)>0$ such that
\bean \label{bound}
\|u_\sigma\|_{H^5(\Omega )}\le c.
\eean

The goal of this work is to study the following  inverse problem (IP):   {\it   Given  $\sigma_0$ and the interior data $u_\sigma|_{\Omega}$, to reconstruct the conductivity $\sigma|_{\Omega}$.}

This inverse problem is of importance in many different scientific and engineering fields including  photoacoustic tomography, studies  of effective properties of composite materials, and approximation of parametric partial differential equations. Photoacoustic tomography is a recent hybrid imaging modality that couples diffusive optical waves with ultrasound waves to achieve high-resolution imaging of optical properties of biological tissues~\cite{AGKNS,CT,BU,BCT}.  The  inverse problem (IP) appears in the second inversion, called quantitative photoacoustic  tomography,  where the derived internal data  is used to recover the optical coefficients of the sample~\cite{Br1,BMT}. Motivated by the search for sharp bounds on the effective moduli of composites many researchers  have considered the problem of characterizing mathematically among all  the gradient fields those solving the equation~\eqref{cond} for some  function $\sigma$ within the set $\Sigma$.  In the  context of approximation of parameter-dependent elliptic problems by greedy algorithms  the  inverse problem (IP) has been considered  with  infinitely many interior data available \cite{CZ}. Hence solving the inverse  problem  with a single datum may reduce the dimensionality of  the set of parameters used to  accurately  approximate  a  targeted  compact set of solutions \cite{PCDDPW}.

Given $\sigma_0$ and the interior data $u_\sigma|_{\Omega}$, the inverse problem can be recasted as a linear steady transport equation satisfied by $\sigma \in \Sigma$,
\bea
\nabla \sigma \cdot \nabla u_\sigma + (\Delta u_\sigma) \sigma = 0 \;\; \mathrm{in}\;\; \Omega.
\eea
The steady  transport equation is one of the basic equations in mathematical physics. It is widely  used in fluid mechanics, for example to model mass transfer~\cite{PS}. From the mathematical  point of view there are several results addressing the well-posdeness of the equation. In order to briefly review some of these results we introduce suitable boundary conditions. To do so we split the  boundary of $\Gamma$ into three disjoint parts, the inflow $\Gamma_{\textrm{in}}$, the outflow set $\Gamma_{\textrm{out}}$, and
 the characteristic set $\Gamma_{0}$, defined by
\bean \label{boundaries} \hspace{0.5cm}
\Gamma_{\textrm{in}} = \{x\in \Gamma: \nabla u_\sigma \cdot \nu<0 \},\,\Gamma_{\textrm{out}} =
 \{x\in \Gamma: \nabla u_\sigma \cdot \nu>0 \},\,\Gamma_0= \Gamma \setminus (\Gamma_{\textrm{in}}  \cup \Gamma_{\textrm{out}}).
\eean
Assuming that $\nabla u_\sigma$ never vanishes in $\Omega$ and using the method of characteristics, one can easily show that the system
\bean \label{transport1}
\nabla \sigma \cdot \nabla u_\sigma + (\Delta u_\sigma) \sigma = 0 \;\; \mathrm{in}\;\; \Omega, \;\; \sigma=\sigma_0 \;\; \mathrm{on}\;\;
\Gamma_{\textrm{in}},
\eean
admits a unique solution. The method of characteristics can not be applied  when the  set of characteristic curves  has a complex structure, for example  when $\grad u_\sigma$ vanishes. In order to overcome this difficulty, many works have considered the case where the lower order term  dominates the transport term. In this framework the theory of linear steady transport equations becomes part of a more general theory of degenerate elliptic equations  (\cite{Fi,KN,OR}, see also Chapter 12 in \cite{PS} and references therein). Let $\kappa >0$  be a fixed constant. When  $n=3$, and  assuming  that the interior data $u_\sigma$ verifies
\bean \label{condomega}
\inf_{x\in \Omega}|\Delta u_\sigma (x)|> \kappa>0,
\eean
it was proved in \cite{OR},  by using  the vanishing viscosity method, that  the system~\eqref{transport1} admits a weak solution  in $L^2(\Omega)$, satisfying
\bea
\| \sigma\|_{L^2(\Omega)} \leq \frac{1}{\kappa}\|\nabla \sigma_0 \cdot \nabla u_\sigma + (\Delta u_\sigma) \sigma_0 \|_{L^2(\Omega)}.
\eea
If, in addition,  $\partial \Gamma_{\textrm{in}}$ is a one-dimensional $C^1$ manifold, then there is a unique weak solution $\sigma \in L^2(\Omega)$. However in general  without  geometrical assumptions on the  characteristic  set, the  system~\eqref{transport1} may have other weak solutions. Indeed the problems of  uniqueness  and  regularity of solutions to the system~\eqref{transport1}  are difficult issues,  most of the available results relate  to the case of multi-connected domains with isolated  inflow boundary $\Gamma_{\textrm{in}}$ \cite{KN,OR}.

Linear steady transport equations is also part of a general theory  developed  by Friedrichs dealing  with symmetric positive systems of first-order linear partial differential equations~\cite{Fr}.  Recently,  numerical  methods  based on discontinuous Galerkin methods  have  renewed interest in Friedrichs systems \cite{EGC,AB,PE12}. In this settings the conditions required  for existence and uniqueness of solutions to the system  are almost similar to the  ones derived in \cite{Fi,KN,OR}.

Due the specificity  of our equation (the lower order term is linked  to the transport speed)  the condition  \eqref{condomega} which is the minimal requirement for the  existence   of solutions in all the presented  variational approaches   is  stronger than
assuming that  $\grad u_\sigma$ does not vanish in $\Omega$. Furthermore, if the interior  data is recovered with zero noise the existence  of solution is  in fact  guaranteed  from  the fact that  the transport  equation is originated  from  the elliptic system \eqref{cond}. The uniqueness of solutions to the  system  \eqref{transport} is  established in \cite{Al1} using elliptic unique continuation properties of the  transport speed $\nabla u_\sigma$.   However, in applications, the internal data $u_\sigma|_{\Omega}$ is usually measured with  limited accuracy. Hence, studying the well-posedness of the inverse problem (IP)  when the measurements are noisy is of critical importance.

We aim in this paper  to study the well-posedness of the inverse problem (IP)  as well as its numerical  resolution. If  the interior measurement is noisy, that is, only  $U^\delta \in H^5(\Omega)$ is available, satisfying
\bean \label{noise}
\|U^\delta- u_\sigma\|_{H^5(\Omega)}  \leq  \delta\|u_\sigma\|_{H^5(\Omega)}, \;\; \int_{\Omega} U^\delta dx=0,
\eean
with $\delta \in (0, 1)$ is small enough, could we construct $\sigma_\delta\in L^2(\Omega)$, solution to the transport equation \eqref{transport1}, with $U^\delta$ substituting $u_\sigma$, that tends   to $\sigma \in \Sigma$ when the noise $\delta $ tends to zero?  This question is also related to the  numerical approximation of the solution  to the inverse problem (IP).

Since in  general  a noisy data  $U^\delta$ does not  belong to
$\mathcal U:=  \big\{u_\sigma \in H^5(\Omega): \sigma \in \Sigma \big\}$,  the transport equation   \eqref{transport} with $u_\sigma$ substituted by $U^\delta$ does not need to have a solution (see example 2.4 in \cite{BMT}). Therefore, it is necessary to find  an adequate regularization method  to  the  linear equation \eqref{transport}. The  vanishing viscosity  method is a well known way to regularize this type of equation, it   consists  in adding a second order term $\va \Delta \cdot$, with $\va$ is a small positive parameter \cite{Al1}. The regularized equation becomes then an elliptic one and  the singularities of targeted function $\sigma$ may not  be  visible in the regularized solution. In this paper we propose a first order regularization method   based on an original regularization of the lower order term that does not  improve the regularity of the solution.
We note that since  the  transport equation \eqref{transport1}  is solved in the weak $L^2$ topology,  the assumption on the regularity of $\sigma$,  $u_\sigma$,   and the noisy data  $U^\delta$ may be significantly reduced.  Indeed, we considered such a large smoothness to ease the presentation  of our theoretical stability estimates. \\

The paper is organized as follows. In section 2, we study the stability of the solution of the inverse problem (IP) with respect to the noise in the internal measurement.  We  prove the existence and uniqueness of solution to the  regularized  transport equation~\eqref{transport5} in Theorem  \ref{existenceuniquenessregularized}. We also show the convergence of the regularized solution to the targeted conductivity distribution in Theorem~\ref{theoremstability}. Section 3 is devoted to the  numerical approximation of the solution using discontinuous Galerkin method.  We derive error estimates of the  numerical approximation in Theorem~\ref{thmerror}. We finally  provide in section 4 several numerical tests  based on synthetic data, and  assuming a lower regularity of the conductivity distribution.

\section{Stability estimates}
In this section we  study  the well-posdeness  of the  regularized transport equation using a variational approach developed in~\cite{Fr}.  We introduce the following modified transport equation
\bean \label{transport}
2\nabla \gamma \cdot \nabla u_\sigma +\gamma \Delta u_\sigma = 0 \; \mathrm{in}\; \Omega
\;\; \mathrm{and}\;\; \gamma = \gamma_0,
\eean
for $\gamma \in H^1(\Omega)$ and $ \gamma_0=\sqrt{\sigma_0}$. We know from the system \eqref{cond} that $\gamma=\sqrt{\sigma}$  is a solution of \eqref{transport}. We deduce from  \cite{Al1,BCT} that $\sqrt{\sigma}$ is indeed the unique solution.

Let $U^\delta \in H^5(\Omega)$ be  given, and satisfying \eqref{noise}.
Our first objective in this section is to derive  for $\delta>0$,
small enough  an approximate solution of the following noisy transport equation
\bean \label{transport3}
2\nabla \gamma_\delta \cdot \nabla U^\delta +\gamma_\delta \Delta U^\delta = 0 \; \mathrm{in}\; \Omega
\;\; \mathrm{and}\;\; \gamma = \gamma_0,
\eean

Since  $\gamma_\delta = \gamma_0 $ and  $U^\delta$ are given   in $\Omega_\eta^C$, we can modify the behavior of $U^\delta$ in order to have a right hand term in $L^2(\Omega)$, and $\sigma_0\partial_\nu U^\delta = g$ on $\Gamma$, with  $g$ is chosen such that the associated inflow and outflow boundaries be well-separated, namely,
\bean \label{inout}
\mbox{dist}(\Gamma_{\textrm{in}}, \Gamma_{\textrm{out}}):=  \min_{(x,y)\in \Gamma_{\textrm{in}} \times \Gamma_{\textrm{out}} } |x-y|>0.
\eean
This condition is necessary and sufficient  to define traces of functions belonging to the graph space  of the  steady transport unbounded operator  in \cite{EGC}
\bea
L^2(|g|; \Gamma):= \left\{ v \textrm{  measurable on  }  \Gamma  :   \int_\Gamma |g| v^2 ds(x) <\infty  \right\}.
\eea
In order to ease the analysis we assume that the  right hand term  is zero. Notice that we can also pick $g = 0$ on $\Gamma$, and  in this particular case the boundary condition satisfied by $\gamma$ is not needed anymore. Since the transport equation \eqref{transport3} does not fall within the classical variational framework to prove the
existence, uniqueness of solutions, we introduce an auxiliary problem indexed by $\va>0$ which should be small enough. 

Fix $ \va \in (0, 1),$ and define the  regularized system corresponding to \eqref{transport3} as follows
\bean\label{transport5}
\beta \cdot\nabla \gamma +\mu_\va \gamma = 0\;\mbox{in}\;\Omega
\eean
where $\beta =\nabla U^\delta$ and $\mu_\va = \frac{1}{2} \Delta U^{\delta} +\va$. Due to the regularity of  $U^\delta \in H^5(\Omega)$,  the speed $\beta $ lies in $ C^{1,1}( \overline{\Omega})$, with $\|\nabla \beta_{i}\|_{L^\infty(\Omega)^2}\le L_{i}<\infty$ for $i=1,2$, $\beta_{,i}$ being the  components of $\beta$.
We deduce from \eqref{noise}  that the Lipschitz  constant $L :=\max_{i=1,2} L_{i}$ only depends on $g$, $\Sigma$ and $\Omega$.

Next we prove the existence and uniqueness of \eqref{transport5} with inflow boundary condition
\bea
\gamma = \gamma_0\; \mathrm{on}\; \Gamma_{\textrm{in}}.
\eea
Before that, we introduce the Graph space $V:=\{v\in L^2(\Omega)| \beta\cdot\nabla v\in L^2(\Omega)\}$ equipped with the natural scalar product
\bea
(v,w)_{V}:=(v,w)_{L^2(\Omega)}+(\beta\cdot\nabla v,\beta\cdot\nabla w)_{L^2(\Omega)},\quad\forall v,w\in V,
\eea
and the norm $\|v\|_{V}:=((v,v)_{V})^{1/2}$. It follows that $V$ is a Hilbert space and the triple $\{V,L^2(\Omega),V'\}$ is a Gelfand triple \cite{EGC}. Denote
\bea
L^2(|\beta\cdot n|,\Gamma):=\left\{v\;\mbox{is measurable on}\;\Gamma| \int_{\Gamma}|\beta\cdot \nu |v^2ds<\infty\right\}
\eea
the trace space. Lemma \ref{tracelemma} in the appendix allows us to define traces of functions belonging to the graph space $V$ and to use an integration by parts formula. In addition, for a real number $r$, we define its positive and negative parts respectively as
\bea
r^\oplus:=\frac{1}{2}\{|r|+r\},\quad x^\ominus:=\frac{1}{2}\{|r|-r\}.
\eea

We consider the weak form of \eqref{transport5} as follows:
\bean
\label{eq3}
\mbox{Find}\;\; \gamma_\va^\delta \in V \;\;\mbox{such that}\;\; a(\gamma_\va^\delta,w)= \int_{\Gamma} (\beta\cdot \nu)^\ominus
\gamma_0w\,ds \;\;\mbox{for all}\;\; w\in V,
\eean
where
\bea
a(v, w):=\int_\Omega\left(\beta\cdot\nabla v+\mu_\va v\right)w\,dx +\int_{\Gamma} (\beta\cdot \nu)^\ominus vw\,ds.
\eea

\begin{theorem} \label{existenceuniquenessregularized}
Let  $g$ be fixed in  $H^{\frac{7}{2}}(\Gamma)$ satisfying $\int_\Gamma g dx = 0$ and
condition \eqref{inout}. Then there exists a unique solution  $\gamma_{ \va}^\delta \in V$ to
the system  \eqref{transport5} for $\va \in (0, 1)$.
\end{theorem}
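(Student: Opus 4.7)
My plan is to apply the Banach--Necas--Babuska (BNB) framework for Friedrichs systems as developed in \cite{EGC}. The key observation, which motivates the specific choice of $\mu_\va$, is that by construction
\begin{equation*}
\mu_\va - \tfrac{1}{2}\,\mathrm{div}\,\beta \;=\; \tfrac{1}{2}\Delta U^\delta + \va - \tfrac{1}{2}\Delta U^\delta \;=\; \va \;>\; 0,
\end{equation*}
so the Friedrichs positivity condition is satisfied exactly, with constant $\va$. I would first verify that $a$ defines a continuous bilinear form on $V\times V$: the volume contribution is bounded by Cauchy--Schwarz together with the $L^\infty$ bound on $\mu_\va$, which follows from $U^\delta \in H^5(\Omega)$ and Sobolev embedding; the boundary integral $\int_\Gamma (\beta\cdot\nu)^\ominus vw\,ds$ is controlled using Lemma \ref{tracelemma}, which furnishes a continuous trace $V \to L^2(|\beta\cdot\nu|;\Gamma)$ together with an integration by parts formula. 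Assumption \eqref{inout} on $g$ ensures that $\gamma_0$ can be lifted to a function in $V$ whose inflow trace realizes the boundary data, so that the right-hand side of \eqref{eq3} is a continuous linear form on $V$.

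The second step is the key coercivity identity. Applying integration by parts to $\int_\Omega (\beta\cdot\nabla v)\,v\,dx$, substituting into $a(v,v)$, and using the elementary decomposition $\tfrac{1}{2}(\beta\cdot\nu)+(\beta\cdot\nu)^\ominus = \tfrac{1}{2}|\beta\cdot\nu|$ together with the positivity relation above, I obtain
\begin{equation*}
a(v,v) \;=\; \va\,\|v\|_{L^2(\Omega)}^2 + \tfrac{1}{2}\,\|v\|_{L^2(|\beta\cdot\nu|;\Gamma)}^2, \qquad v \in V.
\end{equation*}
Uniqueness is then immediate: the homogeneous problem forces $v=0$ in $L^2(\Omega)$, after which the equation $\beta\cdot\nabla v = -\mu_\va v = 0$ yields $v=0$ in $V$. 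An analogous integration by parts applied to the adjoint form $a^\ast(v,w):=a(w,v)$ gives the same lower bound $a^\ast(v,v)\ge \va\|v\|_{L^2(\Omega)}^2 + \tfrac{1}{2}\|v\|_{L^2(|\beta\cdot\nu|;\Gamma)}^2$, so the Friedrichs positivity condition also holds for the adjoint operator $L^\ast w = -\beta\cdot\nabla w + (\mu_\va-\mathrm{div}\,\beta)w$.

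The main obstacle is to upgrade this $L^2$-coercivity into a genuine BNB inf-sup condition on the full graph norm $\|\cdot\|_V$, since $a(v,v)$ by itself controls only the $L^2$ part and the weighted boundary trace, not $\|\beta\cdot\nabla v\|_{L^2(\Omega)}$. The standard remedy, which I would adopt, is to appeal to the abstract well-posedness theorem for Friedrichs systems of \cite{EGC}: once the continuity of $a$, the positivity of $\mu_\va - \tfrac{1}{2}\mathrm{div}\,\beta$, and the natural inflow condition on $\Gamma_{\mathrm{in}}$ under \eqref{inout} have been verified, that theorem simultaneously supplies the BNB inf-sup condition on $V$ and surjectivity of the associated linear operator, producing the unique $\gamma_\va^\delta \in V$ solving \eqref{eq3}.
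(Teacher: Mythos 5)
Your proposal lives in the same framework as the paper's proof (Friedrichs positivity with constant $\va$, the trace lemma, BNB), and your preliminary computations are correct: the identity $\mu_\va-\tfrac12\mathrm{div}\,\beta=\va$ and the coercivity identity $a(v,v)=\va\|v\|_{L^2(\Omega)}^2+\tfrac12\int_\Gamma|\beta\cdot\nu|\,v^2\,ds$ are exactly the paper's starting point, and they do give uniqueness. The difference is in how the two arguments get past the obstacle you correctly identify --- that $a(v,v)$ does not control $\|\beta\cdot\nabla v\|_{L^2(\Omega)}$. You invoke the abstract well-posedness theorem for Friedrichs systems as a black box; the paper instead first lifts the boundary data by $\gamma_0$ and then applies the BNB Theorem~\ref{BNB} to the \emph{asymmetric} pairing $X=V_0$, $Y=L^2(\Omega)$ (problem \eqref{eq6}). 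That asymmetry is precisely what resolves your ``main obstacle'': since the test functions range over all of $L^2(\Omega)$, one recovers $\|\beta\cdot\nabla v\|_{L^2(\Omega)}=\sup_{w}\int_\Omega(\beta\cdot\nabla v)w\,dx/\|w\|_{L^2(\Omega)}$ by duality from the supremum $\mathcal S_v$ and the already-controlled $L^2$ part, giving the inf-sup constant $C_{sta}=c\va$ explicitly; the adjoint injectivity is then proved concretely by constructing $w\in V$ from the distributional equation and using the cutoffs $\psi^{\textrm{in}},\psi^{\textrm{out}}$ --- which is the one place where the separation condition \eqref{inout} is genuinely used, a point your plan mentions only in passing. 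For the bare existence-and-uniqueness statement your citation-based route is legitimate, but it hides the $\va$-dependence of the stability constant, which is the stated reason the paper redoes the proof and is what feeds the $\va^{-1}$ factors in the error estimates of Theorem~\ref{thmerror}; if you adopt your route you should still extract $C_{sta}\sim\va$ from the cited theorem, which in practice means carrying out the same duality estimate the paper does.
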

\begin{proof}
We follow the proof in \cite{PE12, EGC} and trace out the dependence of constant  in terms of the regularized parameter $\va$. The proof proceeds in four steps. Further $c>0$ denotes a generic constant that only depends on $\Sigma, \, g$ and $\Omega$.

We first prove that (\ref{eq3}) admits at most one solution. For all $v\in V$, we obtain from integration by parts that
\bean
\label{eq4}
a(v,v) &=& \va \int_\Omega v^2\,dx +\int_{\Gamma} \left[\frac{1}{2}(\beta\cdot \nu)+(\beta\cdot \nu)^\ominus\right]v^2\,ds\nonumber\\
&\ge& \|v\|_{L^2(\Omega)}^2 \va + \frac{1}{2}\int_{\Gamma} |g|v^2\,ds,
\eean
which implies the desired uniqueness. To prove the existence, we introduce an auxiliary  problem:
\bean
\label{eq5}
\mbox{Find}\;\; v'\in V \;\;\mbox{such that}\;\; a(v',w)=-a_0(\gamma_0,w) \;\;\mbox{for all}\;\; w\in V,
\eean
where
\bea
a_0(v,w):=\int_\Omega\left(\beta\cdot\nabla v+\mu_\va v\right)w\,dx.
\eea
The map $V\ni w\mapsto a_0(\gamma_0,w)\in\R$ is bounded in $L^2(\Omega)$.  Due to the  regularity of $\gamma_0$,
the function $f:= \beta\cdot\nabla \gamma_0+\mu_\va \gamma_0$ lies in  $L^2(\Omega) \subset V^\prime$.  Then  if   (\ref{eq5})
admits a solution $v'\in V$, we can obtain that $ \gamma^{\delta}_\va =v'+ \gamma_0\in V$ satisfies (\ref{eq3}) which gives the existence. 

Hence it remains to prove that (\ref{eq5}) is well-posed. The uniqueness of (\ref{eq5}) also follows from (\ref{eq4}). Set $V_{0}:=\{v\in V: v|_{\Gamma_{\textrm{in}}}=0\}$, and consider  the following problem:
\bean
\label{eq6}
\mbox{Find}\;\; v_0\in V_0 \;\;\mbox{such that};\; a_0(v_0,w)=\int_\Omega fw\,dx \;\;\mbox{for all}\;\; w\in L^2(\Omega).
\eean
If (\ref{eq6}) is well-posed and let $v_0\in V_0$ be its unique solution, we have that $v_0\in V$ and $a(v_0,w)=a_0(u,w)$ for all $w\in V$. It means that $v_0$ is a solution of (\ref{eq5}). It remains to prove that (\ref{eq6}) is well-posed.

Here we shall apply the Banach-Ne\v{c}as-Babu\v{s}ka (BNB) Theorem \ref{BNB} with $X=V_0$ and $Y=L^2(\Omega)$. In fact $V_0$ is a Hilbert space since $V_0$ is closed in $V$ and $L^2(\Omega)$ is a reflexive Banach space. The right-hand side in (\ref{eq6}) is a bounded linear form in $L^2(\Omega)$ and $a_0\in \mathcal{L}(V_0\times L^2(\Omega),\R)$ since
\bea
|a_0(v,w)| \le (1+\frac{1}{2}\|\Delta U^\delta
\|_{L^\infty(\Omega)}) \|v\|_{V}\|w\|_{L^2(\Omega)},\quad \forall (v,w)\in X\times Y.
\eea
It remains to verify the conditions (\ref{cond1}) and (\ref{cond2}) of the BNB Theorem.

(i). Proof of condition (\ref{cond1}). For $v\in V_0$, we set
\bea
\mathcal{S}_v:=\sup_{0\ne w\in L^2(\Omega)}\frac{a_0(v, w)}{\|w\|_{L^2(\Omega)}}.
\eea
Similar as (\ref{eq4}), we can get
\bea
a_0(v,v)\ge \va\|v\|_{L^2(\Omega)}^2\quad\mbox{for}\quad v\in V_0.
\eea
Then for $0\ne v\in V_0$,
\bea
\|v\|_{L^2(\Omega)}\le \frac{1}{\va} \frac{a_0(v,v)}{\|v\|_{L^2(\Omega)}}\le \frac{1}{\va}\mathcal{S}_v.
\eea
On the other hand,
\bea
\|\beta\cdot\nabla v\|_{L^2(\Omega)} &=& \sup_{0\ne w\in L^2(\Omega)} \frac{\int_\Omega (\beta\cdot\nabla v)w\,dx}{\|w\|_{L^2(\Omega)}}\\
&=& \sup_{0\ne w\in L^2(\Omega)} \frac{a_0(v,w)- \int_\Omega (\frac{1}{2}\Delta U^\delta+\va)  v w\,dx}{\|w\|_{L^2(\Omega)}}\\
&\le& \mathcal{S}_v+( \frac{1}{2}\|\Delta U^\delta\|_{L^\infty(\Omega)}+1)  \|v\|_{L^2(\Omega)}\\
&\le& \left(1+ ( \frac{1}{2}\|\Delta U^\delta\|_{L^\infty(\Omega)}+1) \va^{-1} \right) \mathcal{S}_v.
\eea
Therefore,
\bea
\|v\|_{V}^2=\|v\|_{L^2(\Omega)}^2 +\|\beta\cdot\nabla v\|_{L^2(\Omega)}^2&\le& \left[\va^{-2}+ \left(1+ ( \frac{1}{2}\|\Delta U^\delta\|_{L^\infty(\Omega)}+1) \va^{-1} \right)^2 \right]\mathcal{S}^2_v,\\
&\le&c \va^{-2} \mathcal{S}_v^2.
\eea
Then, we can take
\bea
C_{sta}:= c\va,
\eea
whence we infer condition (\ref{cond1}).

(ii).  Proof of condition (\ref{cond2}). Let $w\in L^2(\Omega)$ be such that $a_0(v,w)=0$ for all $v\in V_0$. Taking $v\in C_0^\infty(\Omega)$ first we get $\mu_\va w-\mbox{div}(\beta w)=0$ in $\Omega$. Hence, $\beta \cdot \nabla w=\mu_\va w-(\mbox{div}\beta)w\in L^2(\Omega)$ implying that $w\in V$. Then we have, for all $v\in V_0$
\bea
\int_{\Gamma} (\beta\cdot \nu)vw &=& \int_\Omega\left[(\beta\cdot\nabla v)w+(\beta\cdot\nabla w)v +(\nabla\cdot\beta)vw\right]dx\\
&=& a_0(v,w)\\
&=& 0.
\eea
Since $\Gamma_{\textrm{in}}$ and $\Gamma_{\textrm{out}}$ are  well-separated, there exist two functions $\psi^\pm\in C^\infty(\overline{\Omega})$  such that \cite{EGC}
\bea
\psi^{\textrm{out}}+\psi^{\textrm{in}}=1\quad\mbox{in}\quad\overline{\Omega},\quad \psi^{\textrm{in}}|_{\Gamma_{\textrm{out}}}=0,\quad \psi^{\textrm{out}}|_{\Gamma_{\textrm{in}}}=0.
\eea
Taking $v=\psi^{\textrm{out}}w\in V_0$, we obtain that
\bea
0=\int_{\Gamma} (\beta\cdot \nu)\psi^{\textrm{out}}w^2\,ds =\int_{\Gamma_{\textrm{in}}} (\beta\cdot \nu)w^2\,ds =\int_{\Gamma_{\textrm{out}}} (\beta\cdot \nu)^\oplus w^2\,ds,
\eea
which further implies that $w=0$ on $\Gamma_{\textrm{out}}$. 

Finally, we observe that
\begin{eqnarray*}
0&=& \int_\Omega [\mu_\va-\mbox{div}(\beta w)]w\,dx\\
&=& \va \int_\Omega
w^2\,dx + \frac{1}{2}\int_{\Gamma}(\beta\cdot \nu )w^2\,ds\\
&=& \va\|w\|_{L^2(\Omega)}^2.
\end{eqnarray*}
Therefore, $w=0$ in $\Omega$, which completes the proof.
\end{proof}

The unique solution of \eqref{transport} and the regularized one \eqref{transport5}, satisfy the  following stability estimate.

\begin{theorem} \label{theoremstability}
Let  $g$ be fixed in  $H^{\frac{7}{2}}(\Gamma)$ satisfying $\int_\Gamma g dx = 0$ and
condition \eqref{inout}. Then, there exist $c>0$ and  $s\in (0, \frac{1}{2})$, that only depends on $\sigma, g, \eta, $ and $ \Omega$ such that
\bean
\label{stability}
\int_{\Omega_\eta}|(\gamma_{\va}^\delta)^2- \gamma^2|^{1/2}\,dx \le c(1+\frac{\delta}{\va})(\va+\delta)^s,
\eean
where $\gamma $ and  $\gamma_{\va}^\delta $ are the solutions respectively to  \eqref{transport} and \eqref{transport5}. If in addition $|\nabla u_\sigma|$ does not vanish in $\Omega_\eta$, the inequality \eqref{stability} holds with  $s=\frac{1}{2}$.
\end{theorem}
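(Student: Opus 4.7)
The plan is to (i) reduce \eqref{stability} to an $L^{1/2}$-type control of the pointwise error $e := \gamma_{\va}^\delta-\gamma$, (ii) obtain a bounded $L^2(\Omega)$-estimate on $e$ via the coercivity established in the proof of Theorem~\ref{existenceuniquenessregularized}, and (iii) upgrade this to H\"older decay by exploiting the transport structure of the error equation together with a quantitative control on the vanishing set of $\nabla u_\sigma$.

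First, writing $\gamma=\sqrt{\sigma}$ for the unique solution of \eqref{transport} and $\gamma_\va^\delta$ for the solution of \eqref{transport5}, and recasting both equations in the common form $\beta\cdot\nabla\, +\, \mu_\va$ with $\beta = \nabla U^\delta$ and $\mu_\va = \tfrac12\Delta U^\delta + \va$, the error $e$ satisfies
\[
\beta\cdot\nabla e + \mu_\va e = F,\qquad e|_{\Gamma_{\mathrm{in}}}=0,
\]
with source $F := -\nabla v\cdot\nabla\gamma - (\tfrac12\Delta v + \va)\gamma$ and $v := U^\delta - u_\sigma$. By \eqref{noise} and \eqref{bound}, $\|F\|_{L^2(\Omega)} \le c(\delta+\va)$. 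Testing against $e\in V_0$ and invoking the coercivity \eqref{eq4} gives $\va\|e\|_{L^2(\Omega)}^2 \le \|F\|_{L^2}\|e\|_{L^2}$, hence the bounded but non-decaying estimate $\|e\|_{L^2(\Omega)}\le c(1+\delta/\va)$.

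Since $(\gamma_\va^\delta)^2-\gamma^2 = e(\gamma_\va^\delta+\gamma)$ and a uniform $L^\infty(\Omega_\eta)$-bound on $\gamma$ and $\gamma_\va^\delta$ follows from the ODE representation along characteristics (the coefficient $\mu_\va$ being $L^\infty$-bounded), the claim reduces to controlling $\int_{\Omega_\eta}|e|^{1/2}\,dx$. In the non-degenerate case $|\nabla u_\sigma|\ge c_0>0$ on $\Omega_\eta$, the characteristics of $\beta$ (close to those of $\nabla u_\sigma$ for $\delta$ small, by Sobolev embedding of $H^5$) foliate $\Omega_\eta$ with Jacobian bounded away from zero. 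The pointwise formula $|e(X(\tau))|\le c\int_0^\tau|F(X(s))|\,ds$ together with this change of variables upgrades the bound to $\|e\|_{L^2(\Omega_\eta)}\le c(\delta+\va)$, and H\"older's inequality gives $\int_{\Omega_\eta}|e|^{1/2}\,dx\le c(\delta+\va)^{1/2}$, which is the stated $s=\tfrac12$.

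In the general case, decompose $\Omega_\eta = G_\kappa\cup B_\kappa$ with $G_\kappa:=\{x\in\Omega_\eta : |\nabla u_\sigma(x)|\ge\kappa\}$. On $G_\kappa$, the preceding characteristic argument applies with constants degrading polynomially in $\kappa^{-1}$, yielding $\int_{G_\kappa}|e|^{1/2}\,dx \le c\kappa^{-a}(\delta+\va)^{1/2}$ for some $a\ge 0$. On $B_\kappa$, H\"older combined with the bounded estimate of the previous step gives $\int_{B_\kappa}|e|^{1/2}\,dx\le|B_\kappa|^{3/4}(1+\delta/\va)^{1/2}$. The crucial ingredient is a quantitative bound of the form $|B_\kappa|\le C\kappa^r$ for some $r>0$, which follows from the doubling/frequency-function theory for the $C^6$-elliptic equation \eqref{cond} (quantitative unique continuation in the spirit of Garofalo--Lin and Alessandrini, consistent with \cite{Al1}); this is the main obstacle, and its explicit dependence on $\sigma, g, \Omega$ is what forces $s$ to be problem-specific. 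Optimising $\kappa$ in the sum of the two contributions balances the competing terms and yields an exponent $s\in(0,\tfrac12)$ depending only on $\sigma$, $g$, $\eta$, and $\Omega$.
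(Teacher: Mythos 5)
Your overall skeleton (error equation plus coercivity for the a priori bound, a good/bad decomposition of $\Omega_\eta$ according to the size of $|\nabla u_\sigma|$, a measure estimate on the bad set from quantitative unique continuation, and a final optimisation of the threshold) matches the paper's, and your bad-set ingredient $|B_\kappa|\le C\kappa^r$ is exactly the paper's Lemma~\ref{Lemmaunique}. However, the step that actually produces decay in $\va+\delta$ on the good set is a genuine gap. You claim that on $G_\kappa=\{|\nabla u_\sigma|\ge\kappa\}$ ``the preceding characteristic argument applies with constants degrading polynomially in $\kappa^{-1}$.'' This is precisely what cannot be taken for granted: the characteristics of $\beta=\nabla U^\delta$ through points of $G_\kappa$ do not remain in $G_\kappa$; to integrate the ODE for $e$ from the inflow boundary you must follow them through the degenerate region, where $|\beta|$ can be arbitrarily small or vanish, so neither the traversal time nor the Jacobian of the flow is controlled, and characteristics may never reach $\Gamma_{\textrm{in}}$ at all (they can converge to critical points). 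No polynomial bound in $\kappa^{-1}$ is established, and the exponent $a$ you introduce would in any case contaminate the final optimisation. The paper avoids the flow entirely: it exploits the divergence structure by writing $\mbox{div}\bigl(|(\gamma_\va^\delta)^2-\gamma^2|\nabla U^\delta\bigr)=\mbox{sgn}_0\bigl((\gamma_\va^\delta)^2-\gamma^2\bigr)\bigl[\mbox{div}\bigl(\gamma^2\nabla(u_\sigma-U^\delta)\bigr)-2\va(\gamma_\va^\delta)^2\bigr]$ and integrating by parts against $U^\delta$ itself, which yields the global weighted estimate $\|\,((\gamma_\va^\delta)^2-\gamma^2)|\nabla u_\sigma|^2\|_{L^1(\Omega)}\le c(\va+\delta)(1+\delta^2/\va^2)$ for weak solutions; on $\Omega_\eta^t$ the weight is bounded below by $t^2$ and Cauchy--Schwarz gives the $t^{-1}(\va+\delta)^{1/2}$ contribution with no flow analysis.

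A secondary but real problem is your reduction from $(\gamma_\va^\delta)^2-\gamma^2$ to $e=\gamma_\va^\delta-\gamma$ via a ``uniform $L^\infty(\Omega_\eta)$ bound on $\gamma_\va^\delta$ from the ODE representation along characteristics.'' In the degenerate case no such bound is available: the only unconditional control on $\gamma_\va^\delta$ is the $L^2$ bound \eqref{bbn2}, and the paper obtains even $H^1$ regularity only under an extra convexity hypothesis on $u_\sigma$. The paper sidesteps this by working directly with the difference of squares and bounding $\|\gamma^2-(\gamma_\va^\delta)^2\|_{L^1(\Omega)}$ by the $L^2$ norms via Cauchy--Schwarz on the bad set. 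Your argument in the strictly non-degenerate case ($|\nabla u_\sigma|\ge c_0>0$ on $\Omega_\eta$) is essentially sound modulo justifying the pointwise characteristic representation for the weak solution, but as written the general case does not go through.
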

\begin{proof}
Further $c>0$ denotes a generic constant that only depends on $\sigma, \, g, \,  \eta, $ and $\Omega$. We deduce from equations  \eqref{transport} and \eqref{transport5}
that $\xi = \gamma_{\va}^\delta- \gamma$ solves
\bea
\beta \cdot \nabla \xi  +\mu_\va \xi  =
\nabla (u_\sigma-U^\delta) \cdot  \nabla \gamma + \left(\frac{1}{2}\Delta(u_\sigma- U^\delta)-\va \right)
\gamma\; \mathrm{in}\; \Omega
\;\; \mathrm{and}\;\; \xi = 0\; \mathrm{on}\; \Gamma_{\textrm{in}}.
\eea
Using the variational formulation for the transport equation, and the fact that  $\sigma\in \Sigma$,
we find
\bea
\va  \|\xi\|_{L^2(\Omega)} \leq c\| U_\delta- u_\sigma\|_{H^1(\Omega)} +c\| U^\delta- u_\sigma\|_{H^2(\Omega)}+c\va.
\eea
Combining the inequality above with  estimate \eqref{noise}, we get
\bean \label{bbn1}
 \|\xi \|_{L^2(\Omega)} \leq c \left(\frac{\delta}{\va} +1 \right).
\eean
Since $\sigma\in \Sigma$, we immediately  deduce from \eqref{bbn1} the following bound
\bean \label{bbn2}
\|\gamma_\va^\delta\|_{L^2(\Omega)} \leq c\left(\frac{\delta}{\va} +1 \right).
\eean
Recall that $\gamma^2$  and $(\gamma_\va^\delta)^2$ solve respectively the following systems:
\bea
\mathrm{div}\left((\gamma_{\va}^\delta)^2 \nabla U^\delta\right)=
-2\va (\gamma_\va^\delta)^2\; \mathrm{in}\; \Omega \;\; \mathrm{and}\;\; (\gamma_\va^\delta)^2= \sigma_0\; \mathrm{on}\; \Gamma,
\eea
and
\bea
\mathrm{div}(\gamma^2 \nabla u_\sigma)=0\; \mathrm{in}\; \Omega \;\; \mathrm{and}\;\; \gamma^2= \sigma_0\; \mathrm{on}\; \Gamma,
\eea
Let $\mbox{sgn}_0$ be the sign function defined on $\R$ by: $\mbox{sgn}_0(t)=-1$ if $t<0$, $\mbox{sgn}_0(0)=0$ and $\mbox{sgn}_0(t)=1$ if $t>0$. Note that
\bea
\mbox{div}\left(|(\gamma_\va^\delta)^2- \gamma^2|\nabla U^\delta\right) &=& \mbox{sgn}_0((\gamma_\va^\delta)^2- \gamma^2) \mbox{div}\left((\gamma_\va^\delta)^2- \gamma^2)\nabla U^\delta\right)\\
&=& \mbox{sgn}_0((\gamma_\va^\delta)^2- \gamma^2) \left[\mbox{div}\left(\gamma^2\nabla(u_\sigma-U^\delta)\right)
-2\va (\gamma_\va^\delta)^2\right].
\eea
We get by integrating by parts that
\bea
\|\left((\gamma_\va^\delta)^2- \gamma^2\right)|\nabla U^\delta|^2\|_{L^1(\Omega)} \hspace{-2mm}&=& \hspace{-2mm}\int_\Omega \left|\mbox{div}(|(\gamma_\va^\delta)^2- \gamma^2|\nabla U^\delta)\right||U^\delta| \,dx \\
&\leq & \hspace{-2mm} \int_\Omega \left(\left|\mbox{div}\left(\gamma^2\nabla(u_\sigma-U^\delta)\right)\right|+2\va (\gamma_\va^\delta)^2\right)
 |U^\delta| \,dx.
\eea
Thus,
\bea
\|\left((\gamma_\va^\delta)^2- \gamma^2\right)|\nabla u_\sigma|^2\|_{L^1(\Omega)} &\le&
\|((\gamma_\va^\delta)^2- \gamma^2)(|\nabla u_\sigma|^2 -|\nabla U^\delta |^2)\|_{L^1(\Omega)}\\
&\quad& +c\|u_\sigma- U^\delta\|_{H^2(\Omega)}+c(\va+\delta).
\eea
We obtain that
\bean
\label{bbn3}
\||\left((\gamma_\va^\delta)^2- \gamma^2\right)|\nabla u_\sigma|^2\|_{L^1(\Omega)} \le c(\va+\delta)(1+\frac{\delta^2}{\va^2}).
\eean

Denote $\Omega^t_\eta:=\{x\in\Omega_\eta: |\nabla u_\sigma(x)|\geq t\}$. When $t$ tends to zero we expect $|\Omega_\eta \setminus \Omega^t_\eta|$ to approach  zero. The rate of decay depends on how  does $ |\nabla u_\sigma(x)|$ vanish at its critical points. The proof of this technical lemma is given in Appendix \ref{App}.

\begin{lemma} \label{Lemmaunique}
Let $\Omega^t_\eta:=\{x\in\Omega_\eta: |\nabla u_\sigma(x)| \geq t\}$. Then the following inequality holds
\bean \label{uniquecontinuation}
0 \leq |\Omega_\eta\setminus \overline{\Omega^t_\eta}| \leq c t^\alpha,
\eean
where $c>0$ and $\alpha >0 $ only depend
on $\sigma, \Omega, \eta$ and $g$.
\end{lemma}

Then, we get
\bean \label{xx1}
\int_{\Omega^t_\eta}|\gamma^2-(\gamma_\va^\delta)^2|^{1/2}\,dx &\le& t^{-1}\int_{\Omega^t_\eta}|\gamma^2-(\gamma_\va^\delta)^2|^{1/2}|\nabla u_\sigma|\,dx \\
&\le& t^{-1}\int_{\Omega}|\gamma^2-(\gamma_\va^\delta)^2|^{1/2}|\nabla u_\sigma|\,dx  \nonumber\\
&\le& t^{-1}|\Omega|^{1/2} \left(\int_{\Omega}|\gamma^2-(\gamma_\va^\delta)^2||\nabla u_\sigma|^2\,dx\right)^{1/2} \nonumber\\
&\le& ct^{-1}\left((\va+\delta)(1+\frac{\delta^2}{\va^2}) \right)^{1/2},\nonumber
\eean
and
\bea
\int_{\Omega_\eta\backslash \overline{\Omega^t_\eta}}|\gamma^2-(\gamma_\va^\delta)^2|^{1/2}\,dx &\le& |\Omega_\eta\backslash\Omega^t_\eta|^{1/2}\|\gamma^2-(\gamma_\va^\delta)^2\|_{L^1(\Omega)} \\
&\le& C|\Omega_\eta\backslash\Omega^t_\eta|^{1/2}\left(\frac{\delta}{\va} +1 \right)\\
&\le& ct^{\frac{\alpha}{2} } \left(\frac{\delta}{\va} +1 \right).
\eea
Hence
\bea
\int_{\Omega_\eta\backslash\overline{\Omega^t_\eta}}|\gamma^2-(\gamma_\va^\delta)^2|^{1/2}\,dx &\le& \int_{\Omega\backslash\Omega^t_\eta}|\gamma^2-(\gamma_\va^\delta)^2|^{1/2}\,dx+ +\int_{\Omega^t_\eta}|\gamma^2-(\gamma_\va^\delta)^2|^{1/2}\,dx\\
&\le& c\left [ t^{-1}\left((\va+\delta)(1+\frac{\delta^2}{\va^2}) \right)^{1/2}+  t^{\frac{\alpha}{2} } \left(\frac{\delta}{\va} +1 \right)\right ]\\
&\le& c\left [ t^{-1}(\va+\delta)^{1/2} +  t^{\frac{\alpha}{2} } \right ]\left(\frac{\delta}{\va} +1 \right).
\eea
Minimizing the right-hand side with respect to $t>0$, for fixed $\delta$ and $\va$ in $(0, 1)$, we find that the minimum is reached  at $t = (\va+\delta)^{\frac{1}{\alpha+2}}$, and verifies
\bea
 t^{-1}(\va+\delta)^{1/2}= t^{\frac{\alpha}{2}}.
\eea
Then, we get
\bean
\label{bbn4}
\int_{\Omega_\eta}|\gamma^2-(\gamma_\va^\delta)^2|^{1/2}\,dx \le c(1+\frac{\delta}{\va})(\va+\delta)^s,\quad s=
\frac{\alpha}{2(\alpha+2)},
\eean
which finishes the proof of the Theorem when  $\Omega_\eta\setminus \overline{\Omega^t_\eta}$ is not empty. 
\end{proof}

Assuming  now that  $|\nabla u_\sigma|$ does not vanish in $\Omega_\eta$. Regarding the regularity of $|\nabla u_\sigma|$, $\Omega_\eta\setminus \overline{\Omega^t_\eta}$ is empty for $t>0$ small enough, that is $\Omega_\eta =  \Omega^t_\eta$. We then deduce from  \eqref{xx1} that the  inequality \eqref{stability} is valid with $s= \frac{1}{2}$.

\begin{remark}
Notice that  $ \varphi \rightarrow \int_{\Omega} |\varphi|^{1/2} dx$ defines a complete metric on $L^{1/2}(\Omega)$. In fact  it is only  a quasi-norm since it does not satisfy the triangle inequality. Meanwhile the H\"older inequality  still holds \cite{AF}.
\end{remark}

Next, we study the  regularity of the unique weak solution  $\gamma_\va^\delta$ of the regularized transport equation \eqref{transport5}. The main difficulty of the theory of the boundary value problem for the transport equation in the case of nonempty set
$\partial \Gamma_{\textrm{in}}$ is that a solution may develop singularities at $ \partial \Gamma_{\textrm{in}}$ (See for instance  example 12.2.1 in \cite{PS}).

\begin{theorem}  Let  $g$ be fixed in  $H^{\frac{7}{2}}(\Gamma)$ satisfying $\int_\Gamma g dx = 0$ and condition \eqref{inout}, and let   $\gamma_{ \va}^\delta \in V$ be the unique solution to \eqref{transport5} for $\va \in (0, 1)$. Assume in addition that $u_\sigma$ is a convex function. Then there exists $c_0>0$ that only depends on  $\Sigma, \, g$ and $\Omega$ such that  if $\frac{\delta}{\va} \in (0,  c_0)$,  $\gamma_{ \va}^\delta$ lies in $H^1(\Omega)$, and it satisfies
\bean
\label{H1estimate}
\| \gamma_\va^\delta\|_{H^1(\Omega)}\le c\va^{-1},
\eean
where $c>0$ only depends on $\Sigma, \, g$ and $\Omega$.
\end{theorem}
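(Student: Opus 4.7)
The plan is to run an energy estimate on the first derivatives of $\gamma := \gamma_\va^\delta$. Formally differentiating the equation $\beta\cdot\nabla\gamma+\mu_\va\gamma=0$ with respect to $x_i$ yields
\begin{equation*}
\beta\cdot\nabla(\partial_i\gamma)+\partial_i\beta\cdot\nabla\gamma+\mu_\va\,\partial_i\gamma+(\partial_i\mu_\va)\gamma=0.
\end{equation*}
Testing against $\partial_i\gamma$, summing over $i$, and integrating by parts in the first term gives the identity
\begin{equation*}
\va\,\|\nabla\gamma\|_{L^2(\Omega)}^{2}+\int_\Omega \nabla^2 U^{\delta}(\nabla\gamma,\nabla\gamma)\,dx+\int_\Omega (\nabla\mu_\va\cdot\nabla\gamma)\,\gamma\,dx+\tfrac{1}{2}\int_\Gamma (\beta\cdot\nu)|\nabla\gamma|^{2}\,ds=0,
\end{equation*}
where the key cancellation is that $\mu_\va-\tfrac12\div\beta=\mu_\va-\tfrac12\Delta U^\delta=\va$. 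This identity needs to be justified since $\gamma$ is a priori only in $V$; the standard remedy is to establish it first for mollified/approximated solutions and pass to the limit, or via difference quotients, which I would relegate to a brief lemma.

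Next I would exploit the three hypotheses in turn. For the Hessian term, convexity of $u_\sigma$ gives $\nabla^{2}u_\sigma\ge 0$, and the Sobolev embedding $H^{5}(\Omega)\hookrightarrow C^{2}(\overline\Omega)$ applied to \eqref{noise} yields $\|\nabla^{2}(U^\delta-u_\sigma)\|_{L^\infty}\le c\delta$, so
\begin{equation*}
\int_\Omega\nabla^{2}U^\delta(\nabla\gamma,\nabla\gamma)\,dx\ge -c\delta\,\|\nabla\gamma\|_{L^2(\Omega)}^{2}.
\end{equation*}
The coupling term is handled by Young's inequality, $|\int(\nabla\mu_\va\cdot\nabla\gamma)\gamma|\le\tfrac{\va}{4}\|\nabla\gamma\|_{L^2}^{2}+c\va^{-1}\|\gamma\|_{L^2}^{2}$, using that $\nabla\mu_\va=\tfrac12\nabla\Delta U^\delta$ is bounded uniformly in $L^\infty$ by the $H^5$ estimate \eqref{bound}. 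The $L^{2}$ bound on $\gamma$ obtained in \eqref{bbn2} (which is $O(1)$ precisely because $\delta/\va\le c_{0}$) controls this fully.

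The delicate step—and the main obstacle—is the boundary integral $\tfrac12\int_\Gamma(\beta\cdot\nu)|\nabla\gamma|^{2}\,ds$. On $\Gamma_{\mathrm{out}}$ it has the good sign and can be dropped. On $\Gamma_{\mathrm{in}}$ one has $\beta\cdot\nu<0$, but $\gamma=\gamma_{0}$ is Dirichlet there, so the tangential part $\nabla_{\tau}\gamma=\nabla_{\tau}\gamma_{0}$ is bounded by the data, while the equation itself gives $\partial_{\nu}\gamma=(\beta\cdot\nu)^{-1}\bigl(-\mu_\va\gamma_{0}-\beta_{\tau}\cdot\nabla_{\tau}\gamma_{0}\bigr)$ on $\Gamma_{\mathrm{in}}$, which is bounded in $L^{2}(\Gamma_{\mathrm{in}})$ by a constant depending only on $\Sigma, g, \Omega$ thanks to \eqref{inout} (which keeps $|\beta\cdot\nu|$ bounded away from zero on $\Gamma_{\mathrm{in}}$). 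This reduces the inflow contribution to an admissible constant $c$. Collecting everything,
\begin{equation*}
\left(\tfrac{3\va}{4}-c\delta\right)\|\nabla\gamma\|_{L^2(\Omega)}^{2}\le c\va^{-1}\|\gamma\|_{L^2(\Omega)}^{2}+c.
\end{equation*}
Choosing $c_{0}$ so that $c\delta\le\va/2$ whenever $\delta/\va\le c_{0}$ absorbs the Hessian error into the $\va$ term, and combining with \eqref{bbn2} yields $\|\nabla\gamma\|_{L^2(\Omega)}\le c\va^{-1}$, and hence \eqref{H1estimate}.
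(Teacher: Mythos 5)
Your core strategy is the same as the paper's: differentiate the regularized equation, use the convexity of $u_\sigma$ together with \eqref{noise} to show that the effective zeroth--order coefficient $\va I_n+\mathcal H(U^\delta)$ is bounded below by $\tfrac{\va}{2}$ once $\tfrac{\delta}{\va}\le c_0$, and combine the resulting gradient estimate with the $L^2$ bound \eqref{bbn2}. The paper packages this as the positivity condition for the Friedrichs system satisfied by $\zeta=\nabla\gamma_\va^\delta$ and then quotes the a priori estimate of \cite{EGC}, whereas you carry out the energy identity by hand; your cancellation $\mu_\va-\tfrac12\div\beta=\va$, the Hessian lower bound, and the Young--inequality treatment of the coupling term all reproduce the paper's computation and give the same $\va^{-1}$ scaling.

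The genuine gap is in your control of the inflow boundary integral. Condition \eqref{inout} separates $\overline{\Gamma_{\textrm{in}}}$ from $\overline{\Gamma_{\textrm{out}}}$, but it does \emph{not} bound $|\beta\cdot\nu|$ away from zero on $\Gamma_{\textrm{in}}$: whenever the relative boundary $\partial\Gamma_{\textrm{in}}$ is nonempty, continuity forces $\beta\cdot\nu=0$ there, so $\beta\cdot\nu\to 0$ as one approaches $\partial\Gamma_{\textrm{in}}$ from within $\Gamma_{\textrm{in}}$. Your formula $\partial_\nu\gamma=(\beta\cdot\nu)^{-1}\bigl(-\mu_\va\gamma_0-\beta_\tau\cdot\nabla_\tau\gamma_0\bigr)$ therefore does not give an $L^2(\Gamma_{\textrm{in}})$ bound, and even the weighted quantity you actually need,
\begin{equation*}
\tfrac12\int_{\Gamma_{\textrm{in}}}|\beta\cdot\nu|\,|\partial_\nu\gamma|^2\,ds \;\sim\; \tfrac12\int_{\Gamma_{\textrm{in}}}|\beta\cdot\nu|^{-1}\bigl|\mu_\va\gamma_0+\beta_\tau\cdot\nabla_\tau\gamma_0\bigr|^2\,ds,
\end{equation*}
can diverge near $\partial\Gamma_{\textrm{in}}$. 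This is precisely the singularity at $\partial\Gamma_{\textrm{in}}$ that the remark preceding the theorem warns about. Your argument closes as written only if $\partial\Gamma_{\textrm{in}}=\emptyset$ (e.g.\ $\Gamma_{\textrm{in}}$ is a union of connected components of $\Gamma$); otherwise you should either add such a hypothesis or handle the boundary contribution as in the Friedrichs framework of \cite{EGC}, where the inflow condition enters through an admissible boundary operator in the weak formulation rather than through a pointwise division by $\beta\cdot\nu$.
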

\begin{proof}
Differentiating the regularized equation \eqref{transport5}, we obtain that $ \zeta= \nabla \gamma_\va^\delta$ satisfies the following Friedirich system
\bea
(\partial _k U^\delta I_n) \partial_k \zeta
+\left( (\frac{1}{2} \Delta U^\delta+\va )I_n+\mathcal H(U^\delta) \right) \zeta =
-\frac{1}{2} \gamma_\va^\delta \nabla \Delta U^\delta,
\eea
$\partial_i $ denotes the partial derivative with respect to $x_i$, $I_n$ is identity matrix in $\mathbb R^n$,
$H(U_\va^\delta)$  is the Hessian matrix of $U_\va^\delta$.

Since the function $u_\sigma$ is convex, we deduce from \eqref{noise} that there exists a constant $c_0>0$, that only depends on  $\Sigma, \, g$ and $\Omega$, such that
\bea
\mathcal H(U^\delta)  \geq -\frac{ \delta}{2c_0} I_n.
\eea
Hence for $\frac{\delta}{\va} \in (0,  c_0)$, we have
\bea
\va I_n +\mathcal H(U^\delta) \geq \frac{\va}{2}.
\eea

The variational necessary condition for the existence and uniqueness of solution to the Friedirich   system is then satisfied, and we have \cite{EGC}
\bea
\| \zeta \|_{\left(L^2(\Omega)\right)^n} \leq c\va^{-1}\|\gamma_\va^\delta\|_{L^2(\Omega)}.
\eea
Combining the previous estimate with  \eqref{bbn2}, gives the desired result.
\end{proof}
\begin{remark}
Note that the assumption on the convexity of  $u_\sigma$ immediately  implies $\Delta u_\sigma \geq 0$ since $\Delta u_\sigma = \textrm{tr}(\mathcal H(u_\sigma)).$ 
\end{remark}

\section{Discontinuous Galerkin method}
\label{sec:3}
In this section, we discretize the system \eqref{eq3} by a discontinuous Galerkin (DG) method~\cite{PE12}. Let $T=\{T_h\}_h$ be a family of conforming quasi-uniform triangulations such that $\overline{\Omega}=\cup_{\tau\in T_h}\overline{\tau}$, $\tau_i\cap\tau_j=\emptyset$ for $\tau_i,\tau_j\in T_h$, $i\ne j$. Set $h_\tau=\mbox{diam}(\tau)$ and $h=\max_{\tau\in T_h} h_\tau$. For an integral $k$ and $\tau\in T_h$, let $\mathbb{P}^k(\tau)$ be the set of all polynomials on $\tau$ of degree
at most $k$. We define the discrete space
\bea
V_h:=\{v\in L^2(\Omega): v|_\tau\in \mathbb{P}^k(\tau)\quad\forall\;\tau\in T_h\}.
\eea
We split the set of all edges $\mathcal{E}_h$ into the set $\mathcal{E}_h^i$ of interior edges of $T_h$ and the set $\mathcal{E}_h^\partial$ of boundary edges of $T_h$ such that $\mathcal{E}_h=\mathcal{E}_h^i\cup\mathcal{E}_h^\partial$. For an $e\in\mathcal{E}_h^i$, we define the averages and jumps of $v\in V_h$ by
\bea
[v]_e=\lim_{\rho \rightarrow 0^+}\left[v(x- \rho n_e)-v(x+\rho n_e)\right],
\eea
and
\bea
\{v\}_e= \frac{1}{2} \lim_{\rho \rightarrow 0^+}\left[v(x-\rho n_e)+v(x+\rho n_e)\right],
\eea
respectively, where $n_e$ is one of the normal unit vectors to $e$. For $e\in\mathcal{E}_h^\partial$, $n_e$ denotes the outward unit normal.

We consider the discrete problem:
\bean
\label{DGdiscrete}
\mbox{Find}\; \gamma_{\va,h}\in V_h\;\mbox{such that}\; a_h(\gamma_{\va,h}^\delta,w_h)= \sum_{e\in\mathcal{E}_h^\partial}\int_e (\beta\cdot n_e)^\ominus f'w_h\,ds \;\;\mbox{for all}\;\; w_h\in V_h,
\eean
where the the upwind DG bilinear form $a_h$ is given by
\bea
a_h(\gamma_{\va,h}^\delta,w_h)&:=& \sum_{\tau\in T_h}\int_\tau \left[\mu_\va \gamma_{\va,h}^\delta w_h+(\beta\cdot \gamma_{\va,h}^\delta)w_h\right]\,dx+ \sum_{e\in\mathcal{E}_h^\partial} \int_e (\beta\cdot n_e)^\ominus \gamma_{\va,h}^\delta w_h\,ds\\
&\quad& -\sum_{e\in\mathcal{E}_h^i}\int_e (\beta\cdot n_e)[\gamma_{\va,h}^\delta]\{w_h\}\,ds+\sum_{e\in\mathcal{E}_h^i}\int_e \eta|\beta\cdot n_e|[\gamma_{\va,h}^\delta][w_h]\,ds.
\eea

In the following, we assume that $\va>0$ is sufficiently small. We first examine the consistency and discrete coercivity of the upwind DG bilinear form $a_h$. Assume that there is a partition $P_\Omega$ of $\Omega$ into disjoint polyhedra such that $\gamma_\va^\delta\in V_*:=V\cap H^1(P_\Omega)$. We set $V_{*h}=V_*+V_h$. This assumption implies that $\gamma_\va^\delta\in L^2(e)$ for all $e\in\mathcal{E}_h$. The space $H^1(P_\Omega)$ can be replaced by $H^{1/2+\epsilon}(P_\Omega)$ or $W^{1,1}(P_\Omega)$ for a weakly regularity assumption where $0<\epsilon<1/2$. From Lemma 2.14 in~\cite{PE12}, we know that for $u\in V_*$ and all $e\in\mathcal{E}_h^i$, $(\beta\cdot n_e)[u]=0$ a.e. on $e$. Define $\tau_c=\max\{\|\mu_{\va}\|_{L^\infty(\Omega)},L_{\beta}\}$, $\beta_c=\|\beta\|_{L^\infty(\Omega)^2}$ and the following strong norms
\bea
\normmm{v}^2:= \tau_c^{-1}\sum_{\tau\in T_h} \|v\|_{L^2(\tau)}+\frac{1}{2}\sum_{e\in\mathcal{E}_h^\partial} \int_e |\beta\cdot n_e| v^2\,ds+ \sum_{e\in\mathcal{E}_h^i}\int_e \eta|\beta\cdot n_e|[v]^2\,ds,
\eea
and
\bea
\normmm{v}_*^2:= \normmm{v}^2+\beta_c\sum_{\tau\in T_h}\|v\|_{L^2(\partial \tau)}^2
\eea
for $v\in V_{*h}$. For the consistency of $a_h$ we refer to Lemma 2.27(i) in~\cite{PE12} and we conclude the coercivity in the following lemma.
\begin{lemma}
For all $w_h\in V_h$, there exists a constant $C_c$ independent of $h,\va,w_h$ such that
\bean
\label{dgcoe}
a_h(w_h,w_h)\ge C_c\va\normmm{w_h}^2.
\eean
\end{lemma}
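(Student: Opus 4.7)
The plan is to reduce $a_h(w_h, w_h)$ to a sum of manifestly nonnegative terms and then read off the coercivity constant by direct comparison with $\normmm{w_h}^2$. Two ingredients drive the argument: elementwise integration by parts on the advective volume term, and the algebraic identity $\mu_\va - \tfrac{1}{2}\div\beta = \va$ forced by the choice $\beta = \nabla U^\delta$, $\mu_\va = \tfrac{1}{2}\Delta U^\delta + \va$.

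First, on each $\tau \in T_h$ I would use the elementary identity
\[
\int_\tau (\beta\cdot\nabla w_h)\, w_h \, dx = \tfrac{1}{2}\int_{\partial\tau}(\beta\cdot n_\tau)\, w_h^2\, ds - \tfrac{1}{2}\int_\tau(\div\beta)\, w_h^2 \, dx,
\]
sum over $\tau \in T_h$, and split $\partial\tau$ into interior and boundary edges. Using the standard jump identity $[w_h^2]_e = 2\{w_h\}_e [w_h]_e$, the interior part becomes $\sum_{e\in\mathcal{E}_h^i}\int_e(\beta\cdot n_e)[w_h]\{w_h\}\,ds$, which cancels verbatim against the upwind consistency term $-\sum_{e\in\mathcal{E}_h^i}\int_e(\beta\cdot n_e)[w_h]\{w_h\}\,ds$ already present in $a_h$. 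On each boundary edge one combines $\tfrac{1}{2}(\beta\cdot n_e) + (\beta\cdot n_e)^\ominus = \tfrac{1}{2}|\beta\cdot n_e|$, while the zeroth-order volume remainder collapses to $\int_\Omega(\mu_\va - \tfrac{1}{2}\div\beta)\,w_h^2\, dx = \va\|w_h\|_{L^2(\Omega)}^2$ by the structural identity above. Assembling these reductions yields the energy identity
\[
a_h(w_h,w_h) = \va\|w_h\|_{L^2(\Omega)}^2 + \tfrac{1}{2}\sum_{e\in\mathcal{E}_h^\partial}\int_e |\beta\cdot n_e|\, w_h^2\, ds + \sum_{e\in\mathcal{E}_h^i}\int_e \eta\,|\beta\cdot n_e|\,[w_h]^2\, ds.
\]

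Comparing this identity term by term against $\va\normmm{w_h}^2$ closes the argument: the $L^2$ contribution requires $\va \ge C_c\va\tau_c^{-1}$, that is $C_c \le \tau_c$, while the boundary and jump contributions require $1 \ge C_c\va$, which is harmless for $\va$ small enough. Taking $C_c$ equal to any positive lower bound for $\tau_c$ (for instance $\min(L_\beta,1)$) gives a constant independent of $h$, $\va$, and $w_h$, and establishes \eqref{dgcoe}.

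The real subtlety, and what distinguishes this transport problem from the generic Friedrichs setting, is the structural cancellation $\mu_\va - \tfrac{1}{2}\div\beta = \va$. Without the factor $2$ in front of $\nabla\gamma$ in the renormalized equation \eqref{transport}, the residual zeroth-order coefficient after integration by parts would be a sign-indefinite multiple of $\Delta U^\delta$, and no uniform positivity could be extracted. This is precisely the reason the sharp coercivity constant scales like $\va$: the regularization parameter is the entire source of zeroth-order positivity in the energy identity, and the $\tau_c^{-1}$ weight in $\normmm{\cdot}$ is exactly what is needed to absorb the boundedness of the advective coefficient $\beta$.
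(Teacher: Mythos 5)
Your proof is correct and follows essentially the same route as the paper, which simply invokes the standard upwind-DG coercivity computation (the continuous coercivity identity \eqref{eq4} together with the treatment of interface terms cited from Di Pietro--Ern); your explicit elementwise integration by parts, the cancellation of the consistency term against the interface contributions, and the identification $\mu_\va-\tfrac{1}{2}\div\beta=\va$ are exactly the ingredients behind that citation. If anything, your version is more careful on the interior edges: the paper's appeal to the vanishing of $(\beta\cdot n_e)[w_h]$ is a property of functions in $V_*$ rather than of discrete functions, whereas your argument correctly relies on the exact cancellation of $\sum_{e\in\mathcal{E}_h^i}\int_e(\beta\cdot n_e)[w_h]\{w_h\}\,ds$ and the nonnegativity of the penalty term.
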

\begin{proof}
It follows from the coercivity of the bilinear form $a$ and the fact that $(\beta\cdot n_e)[w_h]$ vanishes across interior interfaces that
\bea
a_h(w_h,w_h)\ge \min\{1,c\tau_c\va/4\}\normmm{w_h}^2,
\eea
which yields the desired result since $\va$ is sufficiently small.
\end{proof}

The discrete coercivity of $a_h$ on $V_h$ implies the well-posedness of the discrete problem \eqref{DGdiscrete}. Let $\pi_h$ be the $L^2$-orthogonal projection operator onto $V_h$. Then we have (see Theorem 2.30 in \cite{PE12})
\begin{lemma}
For $(v,w_h)\in V_*\times V_h$, there exists a constants $C_b>0$ independent of $h,\va,v,w_h$ such that
\bean
\label{dgbou}
|a_h(v-\pi_hv,w_h)|\le C_b\normmm{v-\pi_hv}_*\normmm{w_h}.
\eean
\end{lemma}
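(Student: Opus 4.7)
The plan is to decompose $a_h(\xi, w_h)$ with $\xi := v - \pi_h v$ into its four structural pieces --- the element volume contributions from reaction and convection, the inflow boundary contribution, the upwind interior face contribution, and the jump stabilization --- and to bound each piece separately, pairing a slice of $\normmm{\xi}_*$ against a slice of $\normmm{w_h}$ via Cauchy--Schwarz. Beyond Cauchy--Schwarz, the only ingredients I need are the standard discrete trace inequality $\|w_h\|_{L^2(\partial\tau)}^{2} \le C h_\tau^{-1}\|w_h\|_{L^2(\tau)}^{2}$ and the inverse inequality $\|\nabla w_h\|_{L^2(\tau)} \le C h_\tau^{-1}\|w_h\|_{L^2(\tau)}$, both valid on $V_h$ under quasi-uniformity of $T_h$.

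For the reaction piece, $\|\mu_\va\|_{L^\infty(\Omega)}\le \tau_c$ gives $|\sum_\tau \int_\tau \mu_\va \xi w_h\,dx|\le \tau_c \|\xi\|_{L^2(\Omega)}\|w_h\|_{L^2(\Omega)}$, which is absorbed by the volume slice of $\normmm{\cdot}$. For the convection volume piece $\sum_\tau \int_\tau (\beta\cdot\nabla\xi)w_h\,dx$, I would integrate by parts element by element to obtain $-\sum_\tau \int_\tau \xi(\beta\cdot\nabla w_h + (\mathrm{div}\,\beta)w_h)\,dx + \sum_\tau \int_{\partial\tau}(\beta\cdot n_\tau)\xi w_h\,ds$; the remaining volume term is closed off by the inverse inequality on $w_h$, while the element-boundary integral is precisely where the extra slice $\beta_c \sum_\tau \|v\|^{2}_{L^2(\partial\tau)}$ in $\normmm{\xi}_*$ gets consumed, after the matching $\|w_h\|_{L^2(\partial\tau)}$ factor is controlled by the discrete trace inequality. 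The three face contributions --- the inflow $(\beta\cdot n_e)^\ominus \xi w_h$, the upwind $(\beta\cdot n_e)[\xi]\{w_h\}$, and the stabilization $\eta|\beta\cdot n_e|[\xi][w_h]$ --- carry weights that pair cleanly with the $|\beta\cdot n_e|$-weighted face seminorms already present in $\normmm{\cdot}$ and $\normmm{\cdot}_*$; for the upwind term one additionally splits the average $\{w_h\}$ into two one-sided traces and bounds each via the discrete trace inequality, after which edgewise Cauchy--Schwarz concludes.

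The main obstacle I anticipate is the bookkeeping of mesh-dependent constants, ensuring that the final $C_b$ is genuinely $h$-independent: the $h_\tau^{-1}$ produced by the inverse inequality on the interior convection term must cancel against the $h_\tau^{1/2}$ factors coming from the trace inequalities on face integrals, a cancellation that relies on quasi-uniformity. By contrast, $\va$-independence is essentially free, since $\va$ enters $a_h$ only through $\mu_\va = \tfrac12 \Delta U^\delta + \va$, which for $\va \in (0,1)$ is uniformly majorized by $\tau_c$, and no refinement is needed to track this dependence. Once the four pieces are summed, the resulting estimate matches the hypotheses of Theorem 2.30 in \cite{PE12}, from which the announced boundedness follows.
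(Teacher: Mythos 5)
The paper does not actually prove this lemma---it simply invokes Theorem~2.30 of \cite{PE12}---so your sketch must be measured against the standard argument behind that citation, and there it has a genuine gap. The decisive point you are missing is \emph{why the lemma is stated for $v-\pi_h v$ rather than for an arbitrary element of $V_*$}: the norm $\normmm{\cdot}_*$ contains no derivative of its argument, so after your elementwise integration by parts the volume term $\sum_\tau\int_\tau \xi\,(\beta\cdot\nabla w_h)\,dx$ cannot be closed by the inverse inequality alone---that produces $C\beta_c h_\tau^{-1}\|\xi\|_{L^2(\tau)}\|w_h\|_{L^2(\tau)}$, and there is nothing in either norm to absorb the $h_\tau^{-1}$. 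The cancellation you invoke, between this $h_\tau^{-1}$ and the $h_\tau^{1/2}$ from trace inequalities on face integrals, does not exist: those factors sit in different terms of the decomposition, and even formally their product is $h_\tau^{-1/2}$, still unbounded. The correct mechanism is the $L^2(\tau)$-orthogonality of $\xi=v-\pi_h v$ to $\mathbb{P}^k(\tau)$: writing $\int_\tau \xi\,(\beta\cdot\nabla w_h)\,dx=\int_\tau \xi\,((\beta-\overline{\beta}_\tau)\cdot\nabla w_h)\,dx$ with $\overline{\beta}_\tau$ the mean of $\beta$ on $\tau$ (so that $\overline{\beta}_\tau\cdot\nabla w_h\in\mathbb{P}^{k}(\tau)$ is annihilated), one gains $\|\beta-\overline{\beta}_\tau\|_{L^\infty(\tau)}\le L_\beta h_\tau$ from the Lipschitz continuity of $\beta$, and this $h_\tau$ exactly cancels the $h_\tau^{-1}$ of the inverse inequality, yielding the $h$-independent constant $C L_\beta\le C\tau_c$. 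This is also the only place the definition $\tau_c=\max\{\|\mu_\va\|_{L^\infty(\Omega)},L_\beta\}$ earns its keep; without the orthogonality the stated bound is simply false for general $\xi\in V_*$.

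A second, related flaw is your treatment of the face terms carrying $w_h$. Bounding $\{w_h\}$ (or the one-sided traces of $w_h$ arising from the elementwise integration by parts) via the discrete trace inequality again produces an uncompensated $h_\tau^{-1/2}$, since $\normmm{w_h}$ controls $\|w_h\|_{L^2(\tau)}$ but not $\|w_h\|_{L^2(\partial\tau)}$. In the standard proof the interior-face contributions are instead recombined, using $[\xi w_h]=[\xi]\{w_h\}+\{\xi\}[w_h]$ together with the element-boundary terms, so that $w_h$ appears on interior faces \emph{only through its jump} $[w_h]$, which is controlled directly by the $\eta|\beta\cdot n_e|$-weighted penalty part of $\normmm{w_h}$, while the companion factors $\{\xi\}$ and the boundary traces of $\xi$ are absorbed by the extra term $\beta_c\sum_\tau\|\cdot\|^2_{L^2(\partial\tau)}$ in $\normmm{\xi}_*$. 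Your handling of the purely boundary term $(\beta\cdot n_e)^\ominus\xi w_h$ and of the stabilization term is fine as stated; it is the volume convection term and the upwind average term where the argument, as written, does not deliver an $h$-independent $C_b$.
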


Now we state the main result of error estimates.

\begin{theorem} \label{thmerror}
Let $\gamma_\va^\delta,\gamma_{\va,h}^\delta$ be the unique solution of \eqref{eq3} and \eqref{DGdiscrete}, respectively. Assume that $\gamma_\va\in H^{k+1}(\Omega)$. Then we have
\bean
\label{error1}
\normmm{\gamma_\va^\delta-\gamma_{\va,h}^\delta}\le C\va^{-1}h^{k+1/2} \|\gamma_\va^\delta\|_{H^{k+1}(\Omega)}.
\eean
Moreover, there exists a constant $0<s<1/2$ such that
\bean
\label{error2}
\int_\Omega|\gamma^\delta-\gamma_{\va,h}^\delta|^{1/2}\,dx \le C\left(\va^{-1}h^{k+1/2}\|\gamma_\va\|_{H^{k+1}(\Omega)}+ (1+\frac{\delta}{\va})(\va+\delta)^s\right),
\eean
Here, $C>0$ is a constant independent of $\gamma^\delta,\gamma_\va^\delta,h,\va,\delta$.
\end{theorem}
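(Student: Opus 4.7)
The plan is to establish \eqref{error1} via the classical consistency--coercivity--approximation argument for upwind DG, and then to combine it with Theorem~\ref{theoremstability} to deduce \eqref{error2}.

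For \eqref{error1}, let $\pi_h:L^2(\Omega)\to V_h$ be the $L^2$-orthogonal projection and split the error as $\gamma_\va^\delta-\gamma_{\va,h}^\delta=\eta_h+e_h$, where $\eta_h:=\gamma_\va^\delta-\pi_h\gamma_\va^\delta$ and $e_h:=\pi_h\gamma_\va^\delta-\gamma_{\va,h}^\delta\in V_h$. The consistency of $a_h$ with $a$ (Lemma~2.27(i) in \cite{PE12}) applied to $\gamma_\va^\delta$ gives the Galerkin orthogonality $a_h(\gamma_\va^\delta-\gamma_{\va,h}^\delta,w_h)=0$ for every $w_h\in V_h$, so in particular $a_h(e_h,e_h)=-a_h(\eta_h,e_h)$. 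Applying the discrete coercivity \eqref{dgcoe} on the left and the boundedness \eqref{dgbou} on the right yields $C_c\va\,\normmm{e_h}^2\le C_b\,\normmm{\eta_h}_*\,\normmm{e_h}$, hence $\normmm{e_h}\le C\va^{-1}\normmm{\eta_h}_*$. A triangle inequality then reduces \eqref{error1} to the standard approximation estimate $\normmm{v-\pi_h v}_*\le C h^{k+1/2}\|v\|_{H^{k+1}(\Omega)}$ on shape-regular quasi-uniform meshes, the extra half power of $h$ being produced by the edge- and boundary-trace contributions of $\normmm{\cdot}_*$.

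For \eqref{error2}, I would use the quasi-subadditivity $|a+b|^{1/2}\le|a|^{1/2}+|b|^{1/2}$ to split
\begin{equation*}
\int_\Omega|\gamma-\gamma_{\va,h}^\delta|^{1/2}\,dx\le\int_{\Omega_\eta}|\gamma-\gamma_\va^\delta|^{1/2}\,dx+\int_\Omega|\gamma_\va^\delta-\gamma_{\va,h}^\delta|^{1/2}\,dx,
\end{equation*}
exploiting that $\gamma=\gamma_\va^\delta=\gamma_0$ on $\Omega\setminus\overline{\Omega_\eta}$. The first term is controlled by Theorem~\ref{theoremstability} after rewriting $|\gamma-\gamma_\va^\delta|=|\gamma^2-(\gamma_\va^\delta)^2|/(\gamma+\gamma_\va^\delta)$ and using the uniform lower bound $\gamma\ge\sqrt{k_1}$ inherited from $\sigma\in\Sigma$ (together with the positivity of $\gamma_\va^\delta$) to get $|\gamma-\gamma_\va^\delta|^{1/2}\le k_1^{-1/4}|\gamma^2-(\gamma_\va^\delta)^2|^{1/2}$. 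The second term is controlled by H\"older's inequality $\int_\Omega|w|^{1/2}\,dx\le|\Omega|^{3/4}\|w\|_{L^2(\Omega)}^{1/2}$ combined with the elementary embedding $\|\cdot\|_{L^2(\Omega)}\le C\normmm{\cdot}$ that is built into the definition of $\normmm{\cdot}$, and finally by \eqref{error1}.

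The main obstacle is tracking the $\va^{-1}$ constant cleanly: the discrete coercivity in \eqref{dgcoe} is only proportional to $\va$, since for small $\va$ the mass-like term $\mu_\va$ degenerates and provides essentially only the $\va$ piece, so the one-power loss in $\va$ is intrinsic and propagates directly to the final bound. A secondary technical point is that the stronger norm $\normmm{\cdot}_*$ carries boundary- and edge-trace contributions, which forces the use of elementwise trace inequalities of the form $\|v-\pi_h v\|_{L^2(\partial\tau)}\le C h^{k+1/2}|v|_{H^{k+1}(\tau)}$ and thereby produces the fractional rate $h^{k+1/2}$ rather than $h^{k+1}$. Once these two quantitative pieces are in hand, both \eqref{error1} and \eqref{error2} follow by routine assembly through the (quasi-)triangle inequality.
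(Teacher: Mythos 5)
Your proof of \eqref{error1} is essentially the paper's own: the authors simply cite Theorem~2.31 and Lemmas~1.58--1.59 of \cite{PE12}, and what you spell out (the splitting into $\eta_h=\gamma_\va^\delta-\pi_h\gamma_\va^\delta$ and $e_h=\pi_h\gamma_\va^\delta-\gamma_{\va,h}^\delta$, Galerkin orthogonality from consistency, coercivity \eqref{dgcoe} against boundedness \eqref{dgbou}, then the $h^{k+1/2}$ approximation estimate for $\normmm{v-\pi_h v}_*$) is precisely the content of those citations. That part is correct and needs no further comment.

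For \eqref{error2} the paper's proof is a single sentence (``follows from \eqref{stability}, \eqref{H1estimate}, \eqref{error1} and the triangle inequality''), and your assembly is the natural way to fill it in, but two of your steps do not close. First, the H\"older bound $\int_\Omega|w|^{1/2}\,dx\le|\Omega|^{3/4}\|w\|_{L^2(\Omega)}^{1/2}$ applied to $w=\gamma_\va^\delta-\gamma_{\va,h}^\delta$ delivers the DG error to the power $1/2$, i.e.\ a term of size $\bigl(\va^{-1}h^{k+1/2}\|\gamma_\va^\delta\|_{H^{k+1}(\Omega)}\bigr)^{1/2}$, whereas \eqref{error2} carries that quantity to the first power; in the only interesting regime (discretization error below $1$) your route therefore proves a strictly weaker inequality than the one stated, and there is no way to recover the first power from \eqref{error1} alone. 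Second, your passage from $|\gamma^2-(\gamma_\va^\delta)^2|^{1/2}$ to $|\gamma-\gamma_\va^\delta|^{1/2}$ needs a uniform lower bound on $\gamma+\gamma_\va^\delta$, hence the positivity of $\gamma_\va^\delta$, which you assert parenthetically but which is not provided by Theorem~\ref{existenceuniquenessregularized}: $\gamma_\va^\delta$ is only a graph-space solution of \eqref{transport5} and no maximum principle is established for it. To be fair, the paper's one-line proof resolves neither point (and its statement even writes $\gamma^\delta$ where $\gamma$ must be meant), so these are as much defects of the source as of your proposal; but a self-contained argument would have to either prove a sign condition for $\gamma_\va^\delta$ or work throughout with the squared quantities, and would state \eqref{error2} with the half power on the discretization term.
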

\begin{proof}
Following Theorem 2.31 in \cite{PE12} we know
\bea
\normmm{\gamma_\va^\delta-\gamma_{\va,h}^\delta}\le (1+C_1\va^{-1}) \normmm{\gamma_\va^\delta-\gamma_{\va,h}^\delta}_* \le C\va^{-1}\normmm{\gamma_\va^\delta-\gamma_{\va,h}^\delta}_*.
\eea
Using Lemma 1.58 and 1.59 in \cite{PE12} we obtain that
\bea
\normmm{\gamma_\va^\delta-\gamma_{\va,h}^\delta}_*\le Ch^{k+1/2}\|\gamma_\va^\delta\|_{H^{k+1}(\Omega)},
\eea
which implies \eqref{error1}. The estimate \eqref{error2} follows from \eqref{stability}, \eqref{H1estimate}, \eqref{error1} and the triangle inequality.
\end{proof}

\section{Numerical examples}

In this section, we present several numerical examples for the reconstruction of $\sigma$
that demonstrate the accuracy and efficiency of the proposed inversion algorithm. Although we assume $\Omega$ being a $C^6$-smooth bounded domain in theoretical analysis,
here $\Omega$ is set to be $[0,1]\times [0,1]$ for simplicity.  Since only $\nabla u_\sigma$ is used in the inversion
we do not impose the condition $\int_{\Omega} u_\sigma dx= 0$ in the rest of this section.  All of the numerical tests were obtained by means of Matlab numerical implementations. We always choose the parameter $\eta=100$ used in discontinuous Galerkin method described in Section~\ref{sec:3}. The numerical errors Error and the relative $L^2$-error RError are calculated in accordance with the expressions
\bea
\mbox{Error}=\int_\Omega |\gamma-\gamma_{\va,h}|^{1/2}\,dx,
\eea
and
\bea
\mbox{RError}=\frac{\|\gamma- \gamma_{\va,h}\|_{L^2(\Omega)}}{\|\gamma\|_{L^2(\Omega)}},
\eea
respectively.

{\bf Example 1.} Let the exact $u_\sigma$ in $\Omega$ be given by
\bea
u_\sigma=e^{0.5-x_1+(x_2-0.5)^2},
\eea
and the exact conductivity is
\bea
\sigma=e^{3x_1-0.5-(x_2-0.5)^2},
\eea
see Figure~\ref{Exp1.1}. The triangular partition of $\Omega$ is fixed with meshsize $h=0.0295$. The numerical errors for different $k$ and $\va$ are presented in Figure~\ref{Exp1.2}. For fixed small $\va$, the numerical errors, dominated by the errors arising from discontinuous Galerkin approximation, decay as $k$ increase. On the other hand, for fixed large $k$, the numerical errors are dominated by the term $\va^s$ with $s=\frac{\alpha}{2(\alpha+2)}$ given in (\ref{bbn4}). The results demonstrate the convergence of numerical errors with respect to $\va$ with order nearly $O(\va^{1/2})$ which corresponds to the exact theoretical rate since $|\nabla u_\sigma| $ does not vanish here
(Theorem~\ref{theoremstability}). The numerical reconstructions for $k=3$ are shown in Figure~\ref{Exp1.3} with the corresponding relative $L^2$-errors for different choices of $\va$.

\begin{figure}[htb]
\centering
\begin{tabular}{cc}
\includegraphics[scale=0.15]{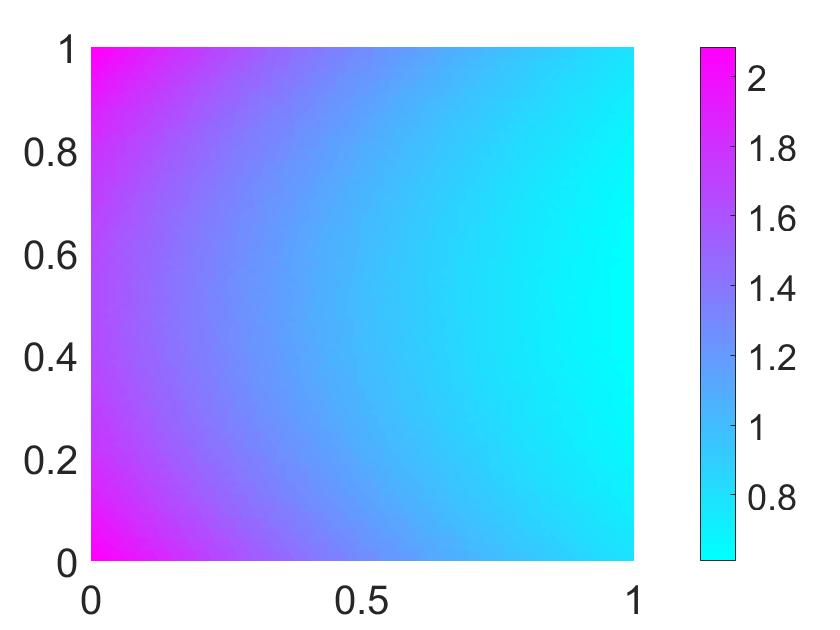} &
\includegraphics[scale=0.15]{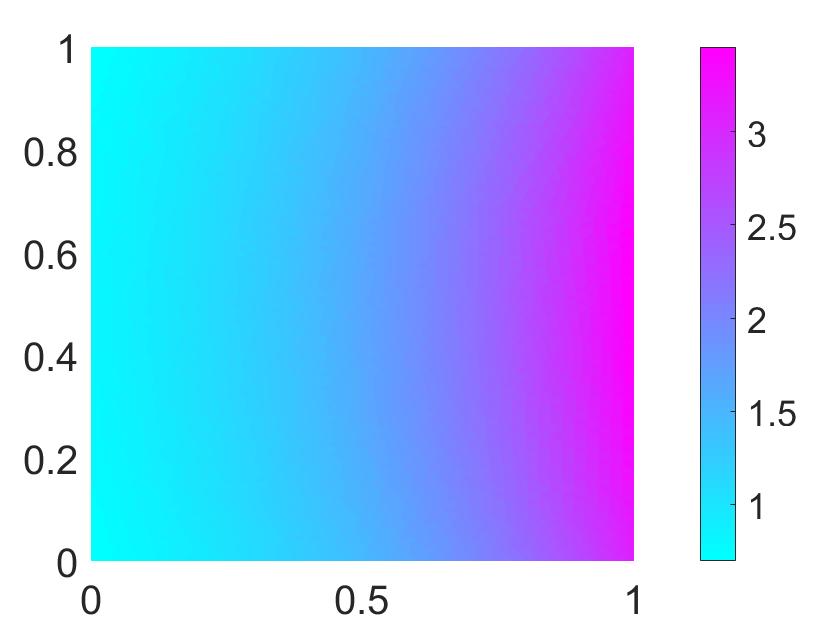} \\
(a) $u_\sigma$ & (b) $\gamma$ \\
\end{tabular}
\caption{Example 1. The exact $u_\sigma$ (a) and $\gamma$ (b).}
\label{Exp1.1}
\end{figure}

\begin{figure}[htb]
\centering
\includegraphics[scale=0.4]{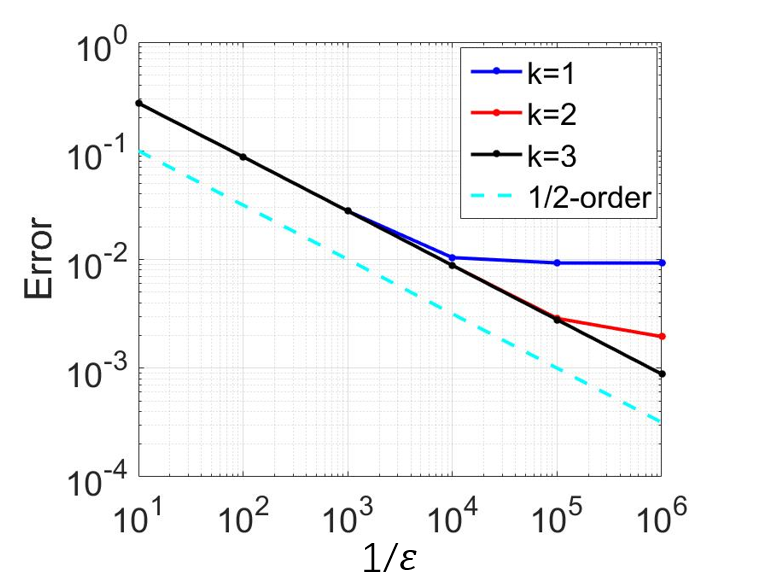}
\caption{Example 1. Numerical errors of the reconstruction for different $k$ and $\va$.}
\label{Exp1.2}
\end{figure}

\begin{figure}[htb]
\centering
\begin{tabular}{ccc}
\includegraphics[scale=0.12]{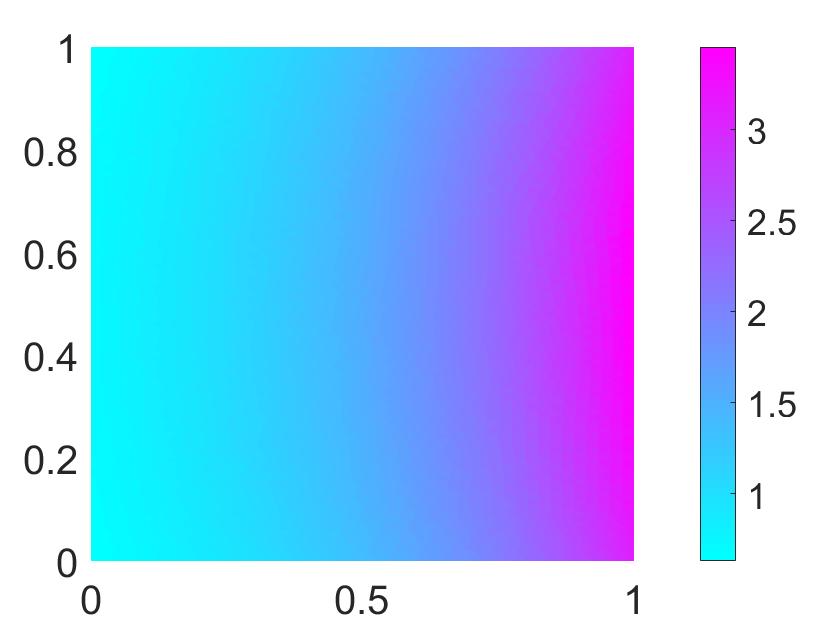} &
\includegraphics[scale=0.12]{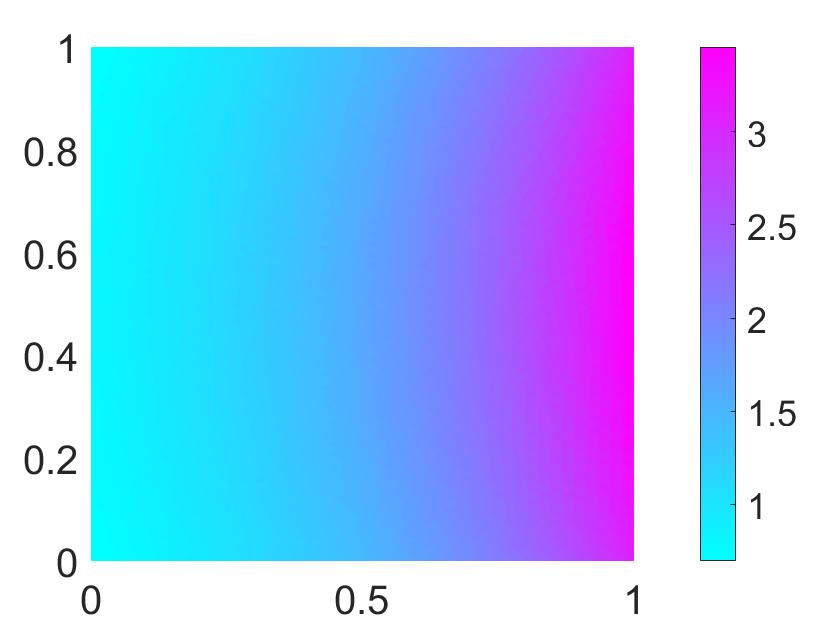} &
\includegraphics[scale=0.12]{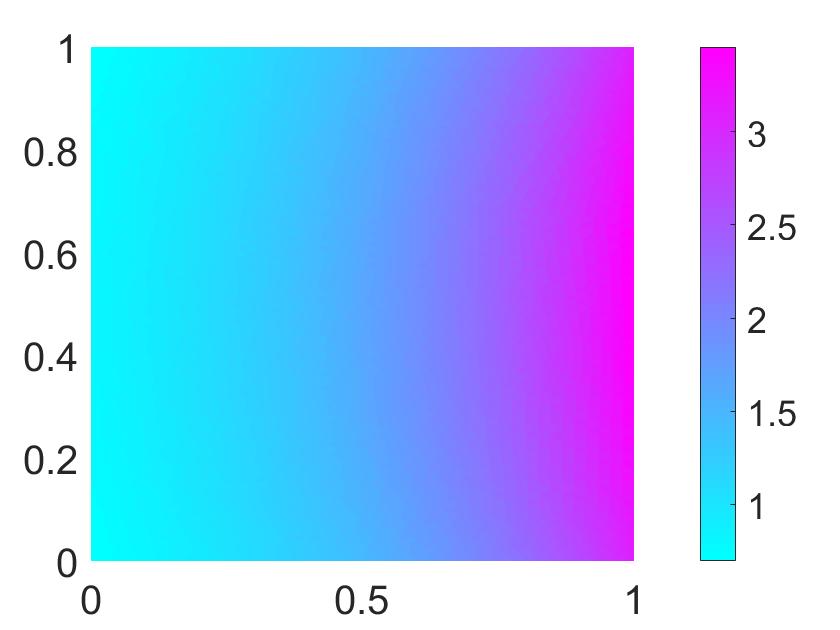} \\
(a) $\va=10^{-1}$ & (b) $\va=10^{-3}$ & (c) $\va=10^{-5}$
\end{tabular}
\caption{Example 1. The reconstruction of $\gamma$ for different $\va$ with relative errors RError=$4.31\times 10^{-2}$, $4.45\times 10^{-4}$ and $4.46\times 10^{-6}$ in (a,b,c), respectively.}
\label{Exp1.3}
\end{figure}

{\bf Example 2.} In this example, we consider the reconstruction of the conductivity function given by
\bea
\sigma(x_1,x_2)=q(3(2x_1-1),3(2x_2-1)),
\eea
where
\bea
q(x_1,x_2)&=&1+0.3(1-x_1)^2 e^{-x_1^2-(x_2+1)^2}-(\frac{1}{5}x_1-x_1^3-x_2^5)e^{-x_1^2-x_2^2}\\
&\quad& - \frac{1}{30}e^{-(x_1+1)^2-x_2^2},
\eea
see Figure~\ref{Exp2.1}. Set $g=e^{x_1+x_2}-(e^2-1)/2$ on $\Gamma$ satisfying $\int_\Gamma g\,dx=0$. The solution $u_\sigma$ and $|\nabla u_\sigma|$ are presented in Figure~\ref{Exp2.1.1}. For simplicity, let the exact measurement data be the polynomial coefficients of the discontinuous Galerkin approximation of $u_\sigma$ in $P^{k_0}(T_h), k_0\ge 2$. In other words, in each triangular element, we have at least $\frac{(k_0+1)(k_0+2)}{2}$ measurement points to produce the measurement data. The triangular partition of $\Omega$ is fixed with meshsize $h=0.0295$. Choosing $k_0=1$ will result into poor reconstruction since in this case $\Delta u_\sigma=0$ in each element. We choose the parameter $k=k_0$. The reconstruction results are presented in Figures~\ref{Exp2.2} and \ref{Exp2.3} and the corresponding numerical errors are displayed in Figure~\ref{Exp2.4}. The accuracy limitation at a level of approximately $10^{-2}$ corresponds to the number of measurement points and the approximations of $\nabla u_\sigma$ and $\Delta u_\sigma$.

\begin{figure}[htb]
\centering
\includegraphics[scale=0.15]{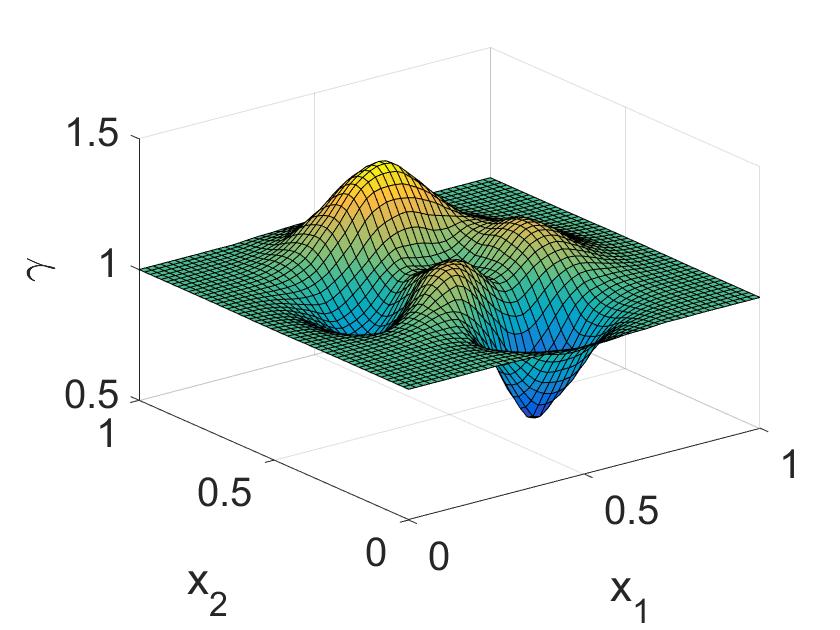}
\caption{Example 2. The exact $\gamma$.}
\label{Exp2.1}
\end{figure}

\begin{figure}[htb]
\centering
\begin{tabular}{cc}
\includegraphics[scale=0.15]{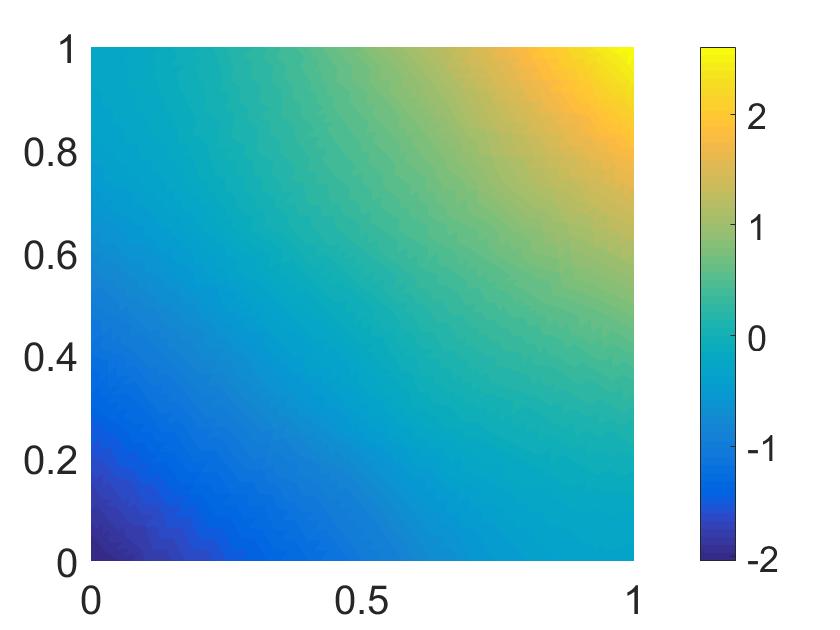} &
\includegraphics[scale=0.15]{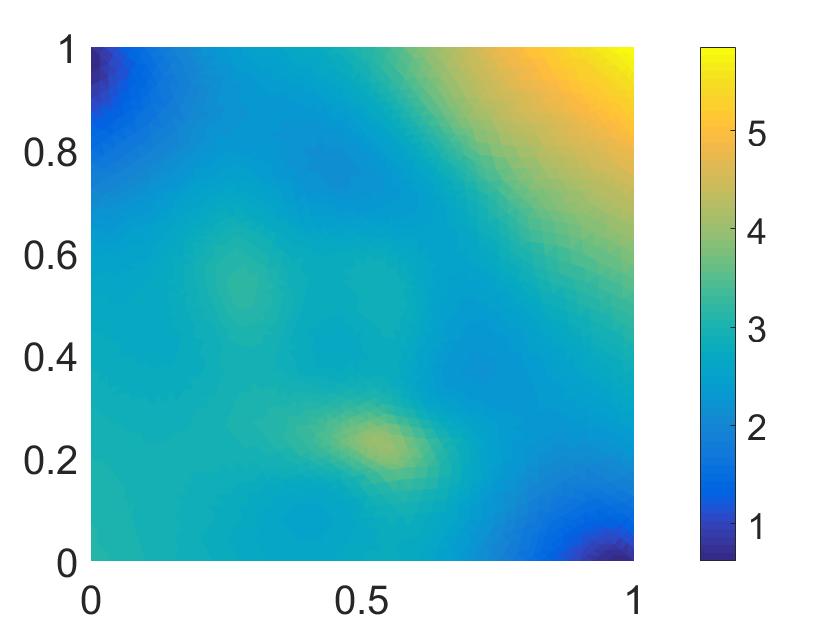} \\
(a) $u_\sigma$ &
(c) $|\nabla u_\sigma|$
\end{tabular}
\caption{Example 2. Exact data $u_\sigma$ and $|\nabla u_\sigma|$.}
\label{Exp2.1.1}
\end{figure}

\begin{figure}[htb]
\centering
\begin{tabular}{ccc}
\includegraphics[scale=0.12]{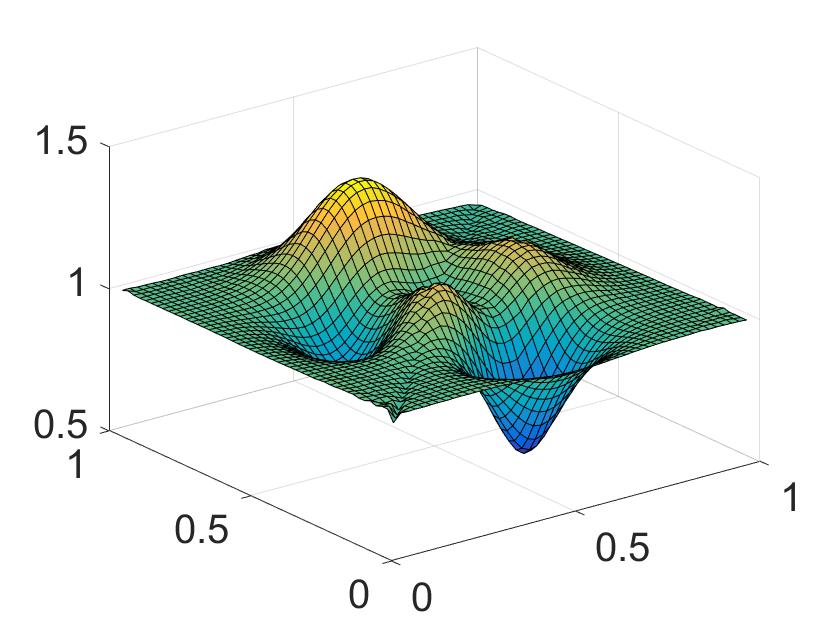} &
\includegraphics[scale=0.12]{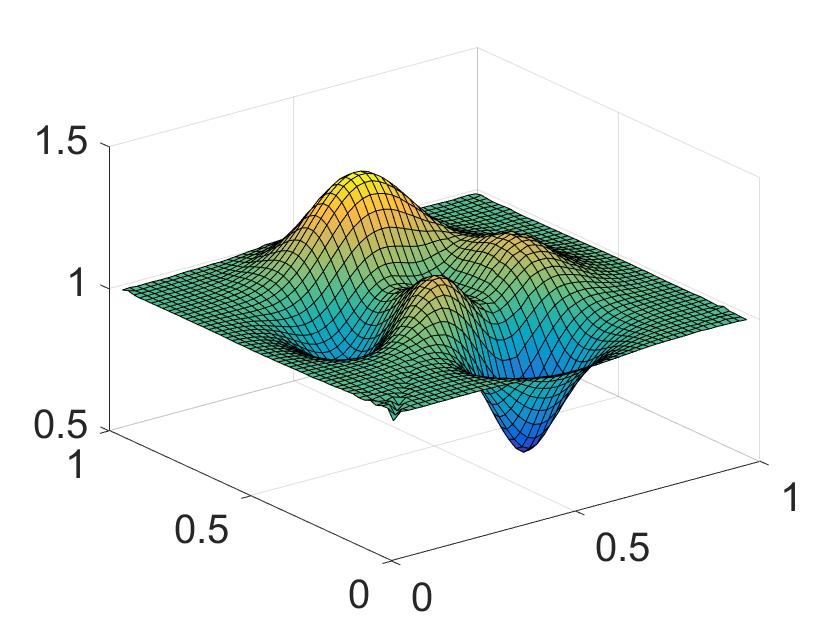} &
\includegraphics[scale=0.12]{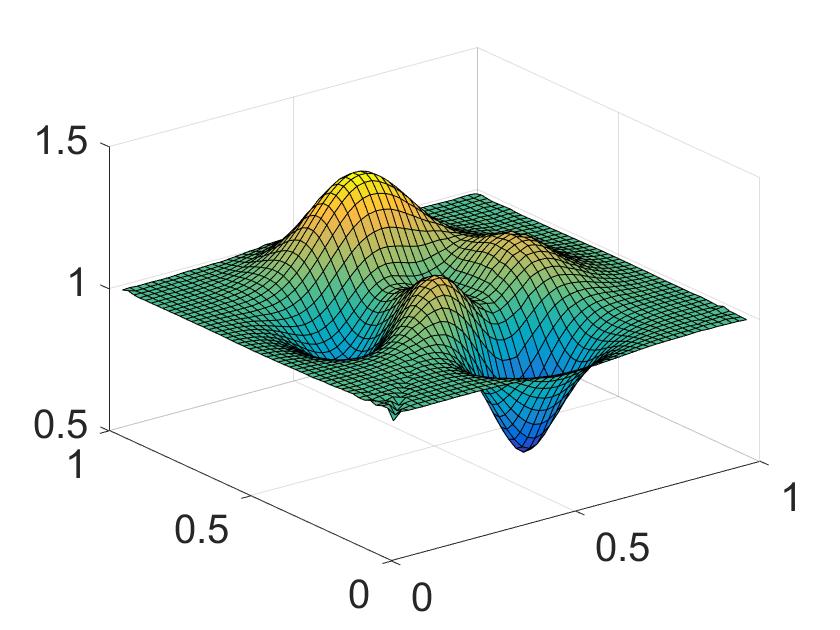} \\
(a) $\va=10^{-1}$ &
(b) $\va=10^{-3}$ &
(c) $\va=10^{-5}$
\end{tabular}
\caption{Example 2. The reconstruction of $\gamma$ when $k_0=2$ for different $\va$ with relative errors RError=$1.71\times 10^{-2}$, $2.53\times 10^{-3}$ and $2.50\times 10^{-3}$ in (a,b,c), respectively.}
\label{Exp2.2}
\end{figure}

\begin{figure}[htb]
\centering
\begin{tabular}{ccc}
\includegraphics[scale=0.12]{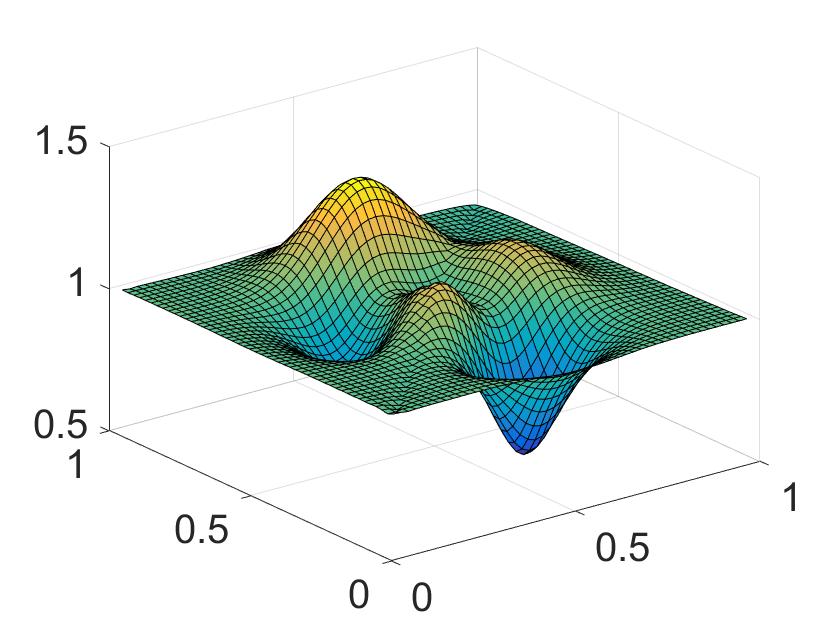} &
\includegraphics[scale=0.12]{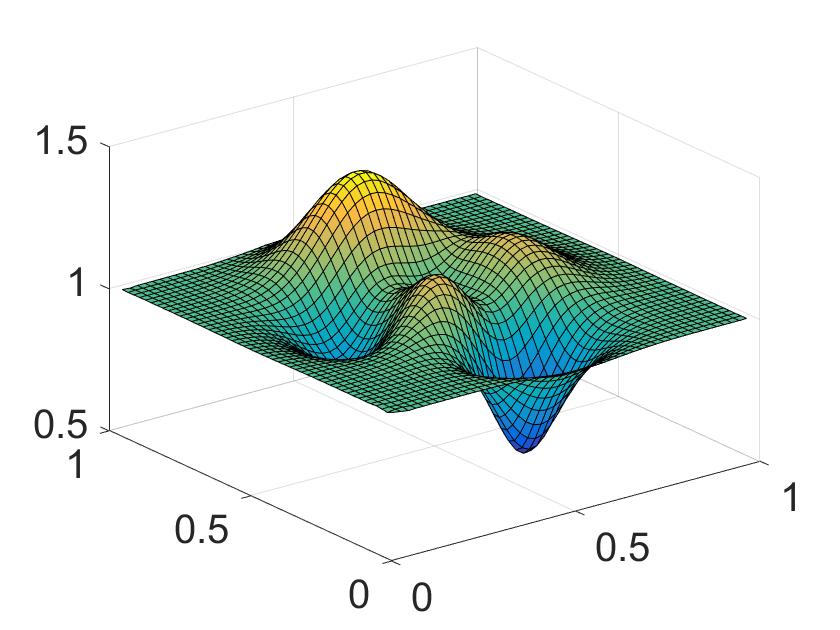} &
\includegraphics[scale=0.12]{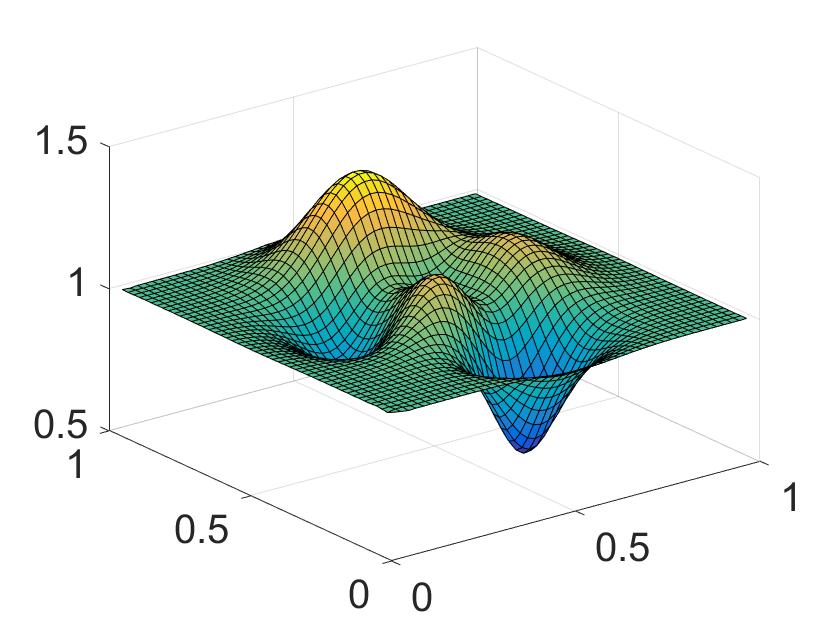} \\
(a) $\va=10^{-1}$ &
(b) $\va=10^{-3}$ &
(c) $\va=10^{-5}$
\end{tabular}
\caption{Example 2. The reconstruction of $\gamma$ when $k_0=3$ for different $\va$ with relative errors RError=$1.66\times 10^{-2}$, $4.04\times 10^{-4}$ and $3.64\times 10^{-4}$ in (a,b,c), respectively.}
\label{Exp2.3}
\end{figure}

\begin{figure}[htb]
\centering
\includegraphics[scale=0.4]{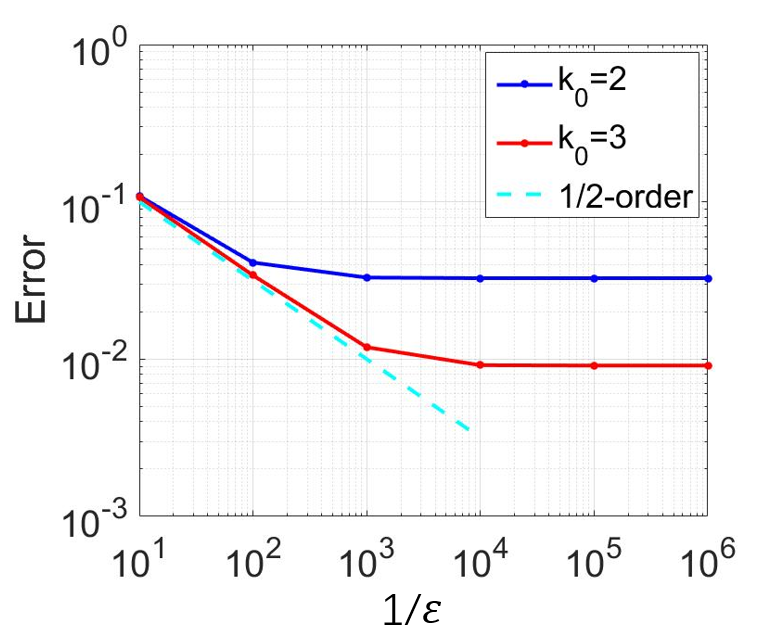}
\caption{Example 2. Numerical errors of the reconstruction for different $k_0$ and $\va$.}
\label{Exp2.4}
\end{figure}

{\bf Example 3.} Next, we consider the reconstruction of conductivity function discussed in Example 2 from noised data
\bea
U^\delta=u_\sigma(1+\delta\xi),
\eea
where $\delta$ is the noise level and $\xi$ is an independent and uniformly distributed random variable generated between -1 and 1. Figure~\ref{Exp3.1.1} displays the perturbated data $U^\delta$ and $\nabla U^\delta$ with $\delta=10\%$ random noise. In addition, less point measurements are taken under a triangular partition of $\Omega$ with meshsize $h=0.0589$ and we choose $k_0=2$. The reconstruction results from noised measurement with level $\delta=5\%$ are shown in Figures~\ref{Exp3.1} and \ref{Exp3.2} which demonstrate the high efficiency and robustness of the proposed inversion algorithm. Figure~\ref{Exp3.3} displays the convergence of numerical errors with respect to $\delta+\va$.

\begin{figure}[htb]
\centering
\begin{tabular}{cc}
\includegraphics[scale=0.15]{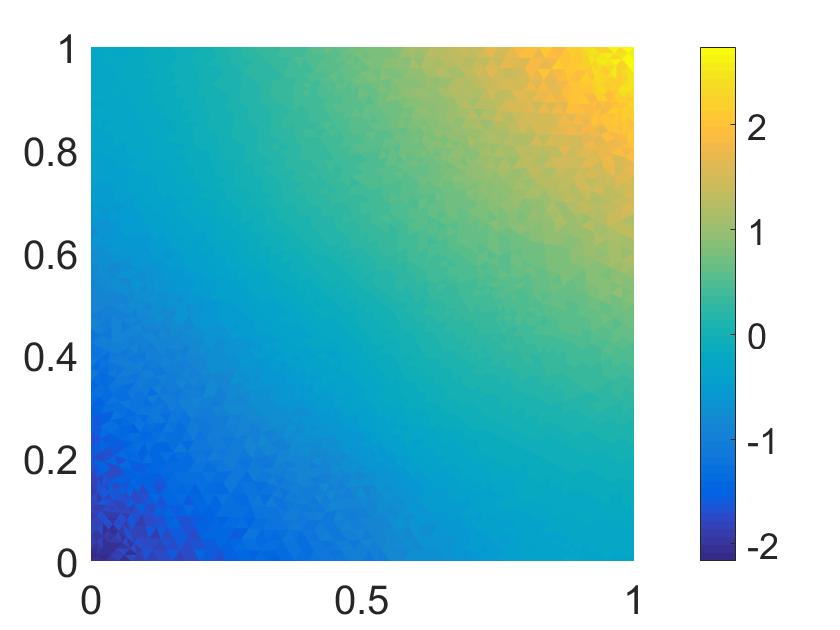} &
\includegraphics[scale=0.15]{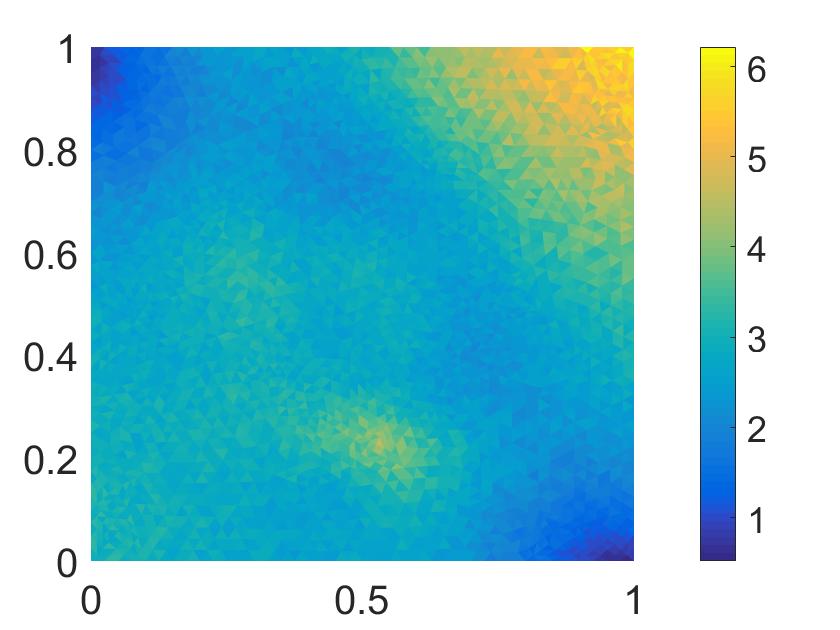} \\
(a) $U^\delta$ &
(c) $|\nabla U^\delta|$
\end{tabular}
\caption{Example 3. Perturbated data data $U^\delta$ and $|\nabla U^\delta|$ with $10\%$ random noise.}
\label{Exp3.1.1}
\end{figure}

\begin{figure}[htb]
\centering
\begin{tabular}{ccc}
\includegraphics[scale=0.12]{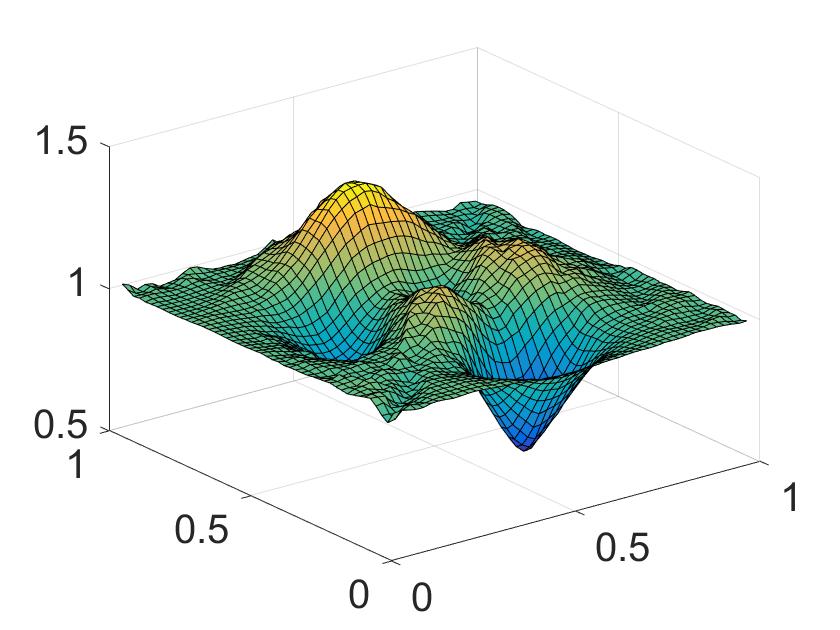} &
\includegraphics[scale=0.12]{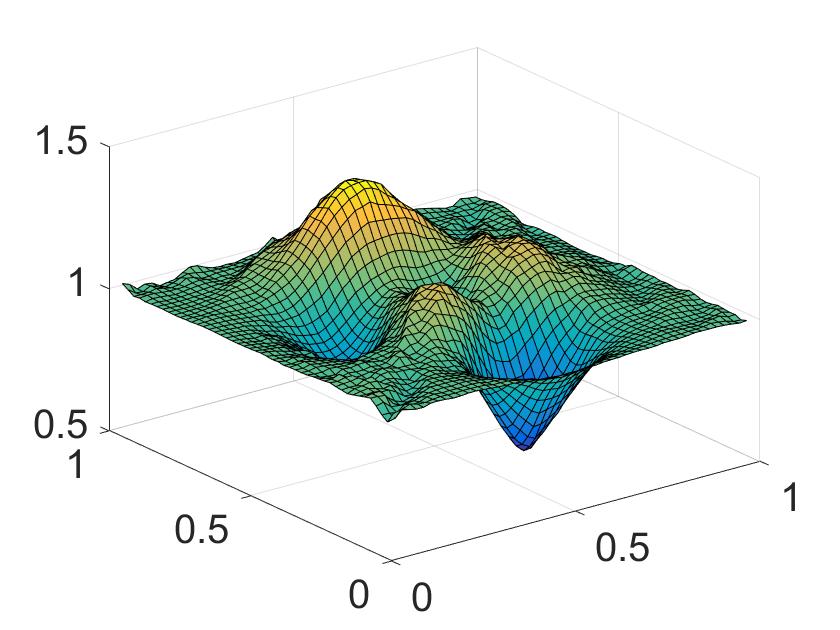} &
\includegraphics[scale=0.12]{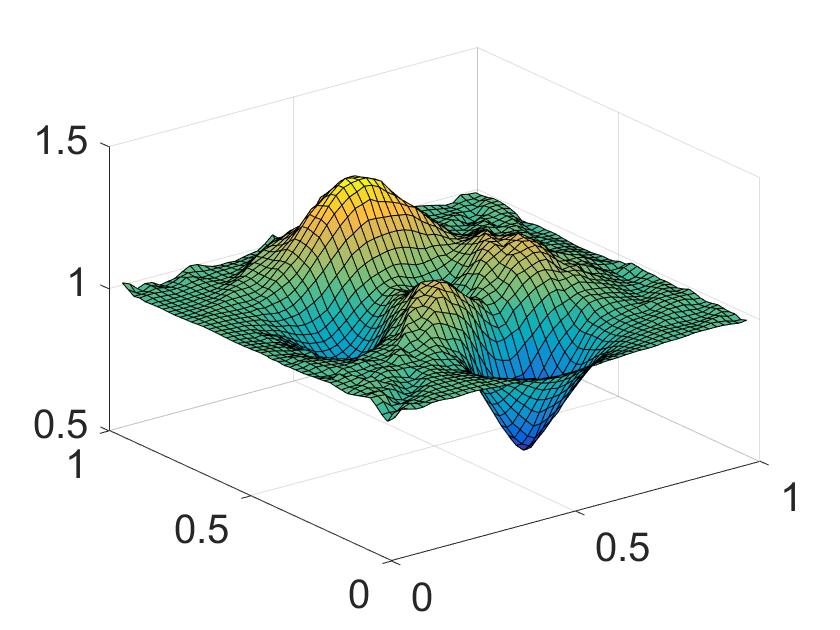} \\
(a) $\va=0.1$ &
(b) $\va=0.06$ &
(c) $\va=0.01$
\end{tabular}
\caption{Example 3. The reconstruction of $\gamma$ from noised measurements when $k_0=2$, $\delta=5\%$ for different $\va$ with relative errors RError=$2.24\times 10^{-2}$, $1.71\times 10^{-2}$ and $1.22\times 10^{-2}$ in (a,b,c), respectively.}
\label{Exp3.1}
\end{figure}

\begin{figure}[htb]
\centering
\begin{tabular}{ccc}
\includegraphics[scale=0.12]{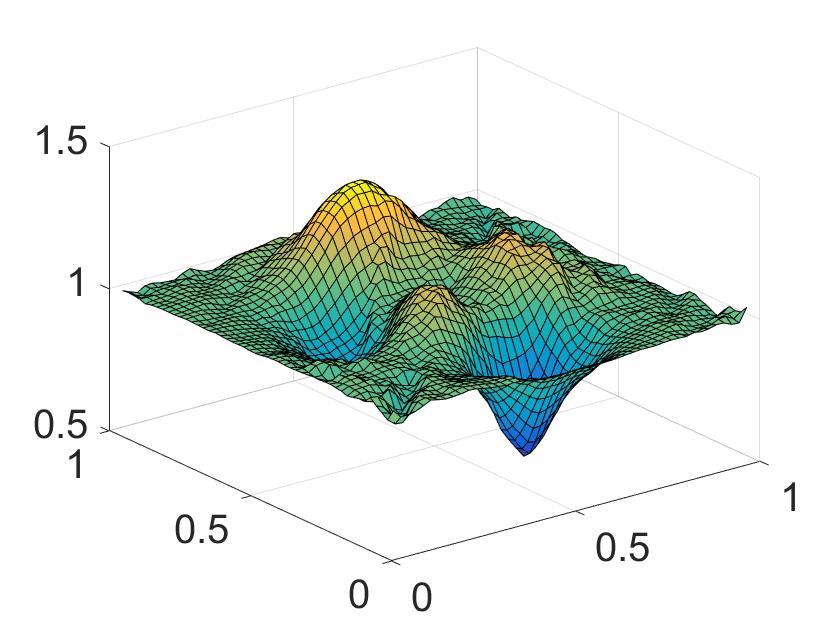} &
\includegraphics[scale=0.12]{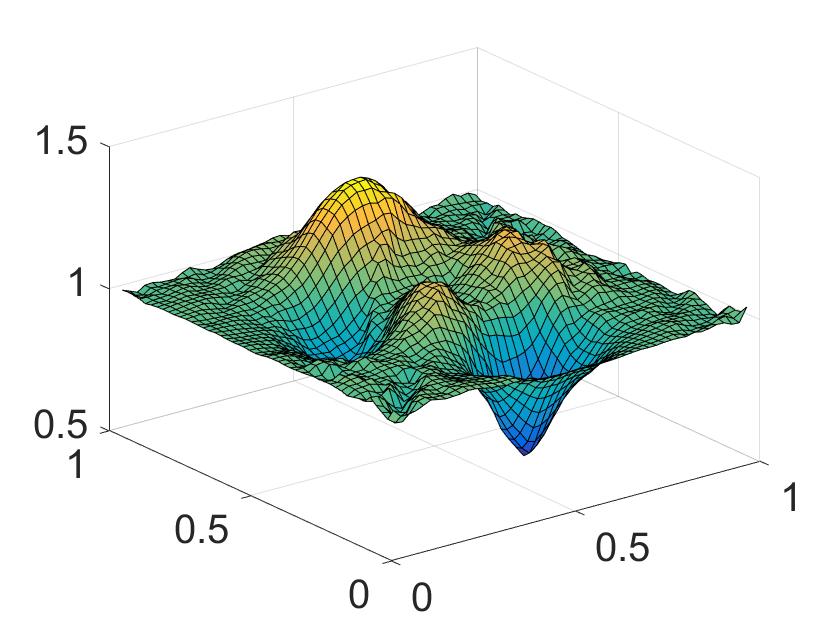} &
\includegraphics[scale=0.12]{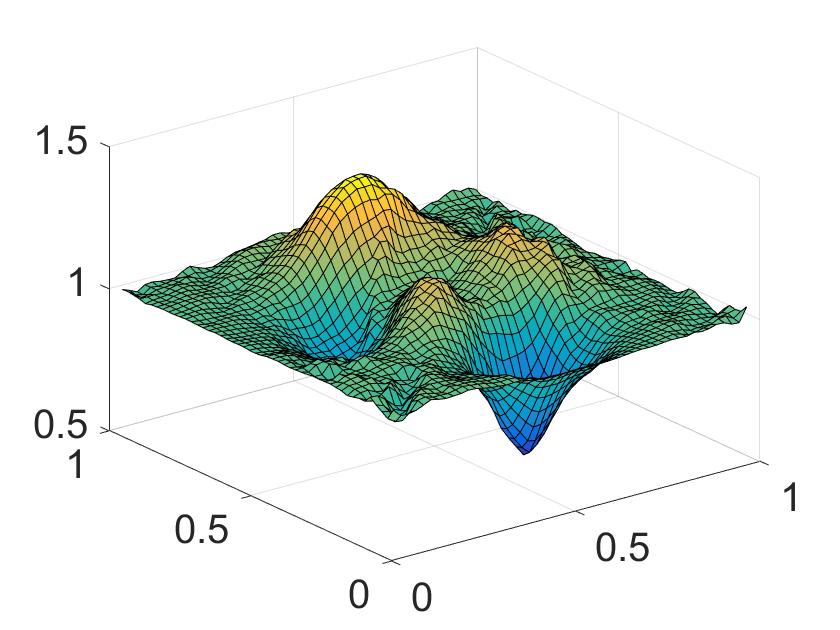} \\
(a) $\va=0.1$ &
(b) $\va=0.06$ &
(c) $\va=0.01$
\end{tabular}
\caption{Example 3. The reconstruction of $\gamma$ from noised measurements when $k_0=2$, $\delta=10\%$ for different $\va$ with relative errors RError=$2.46\times 10^{-2}$, $2.04\times 10^{-2}$ and $1.74\times 10^{-2}$ in (a,b,c), respectively.}
\label{Exp3.2}
\end{figure}

\begin{figure}[htb]
\centering
\includegraphics[scale=0.4]{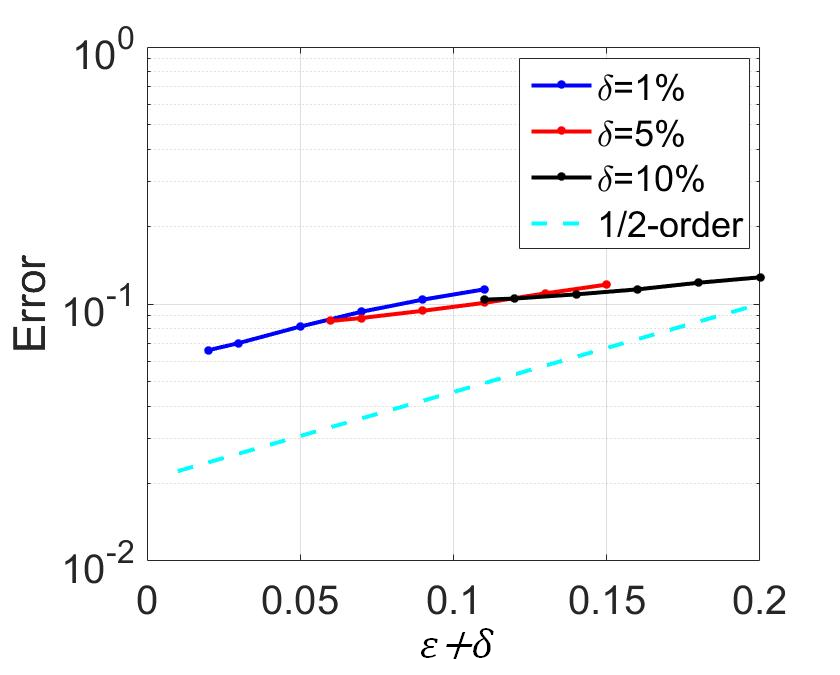}
\caption{Example 3. Numerical errors of the reconstruction for different noise level $\delta$ and $\va$.}
\label{Exp3.3}
\end{figure}

{\bf Example 4.} Finally, we consider the reconstruction of a piecewise constant conductivity, see Figure~\ref{Exp4.1}. For simplicity, the exact $u_\sigma$ in $\Omega$ is set to be
\bea
u_\sigma=\cos(x_1-0.5)e^{x_2}.
\eea
The reconstruction from perturbated point measurement with $10\%$ random noise presented in Figure~\ref{Exp4.2} shows the efficiency of the proposed method.

\begin{figure}[htb]
\centering
\includegraphics[scale=0.15]{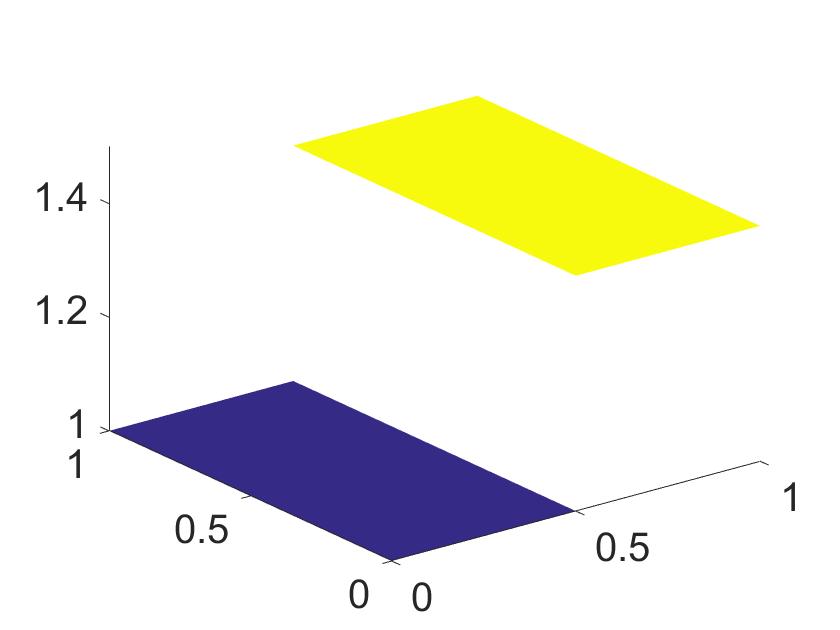}
\caption{Example 4. The exact $\gamma$.}
\label{Exp4.1}
\end{figure}

\begin{figure}[htb]
\centering
\begin{tabular}{ccc}
\includegraphics[scale=0.12]{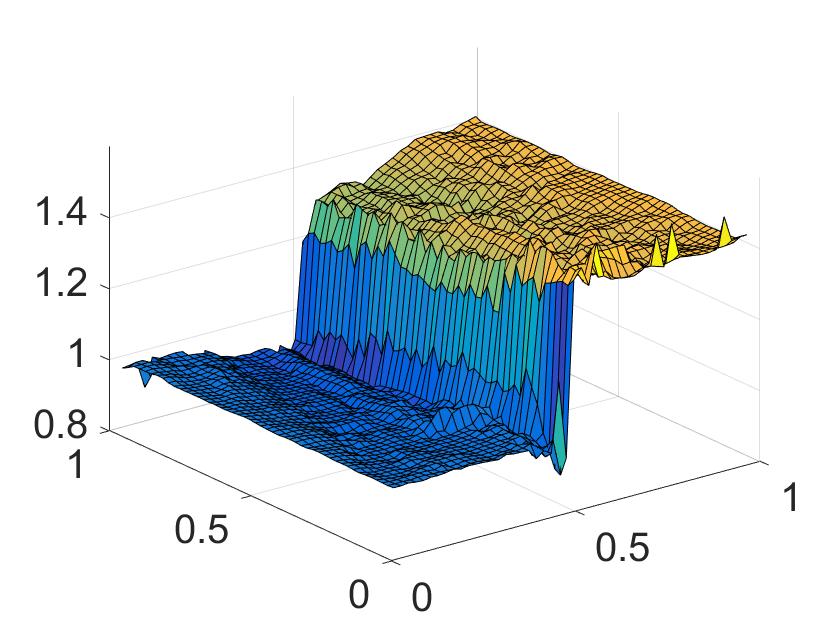} &
\includegraphics[scale=0.12]{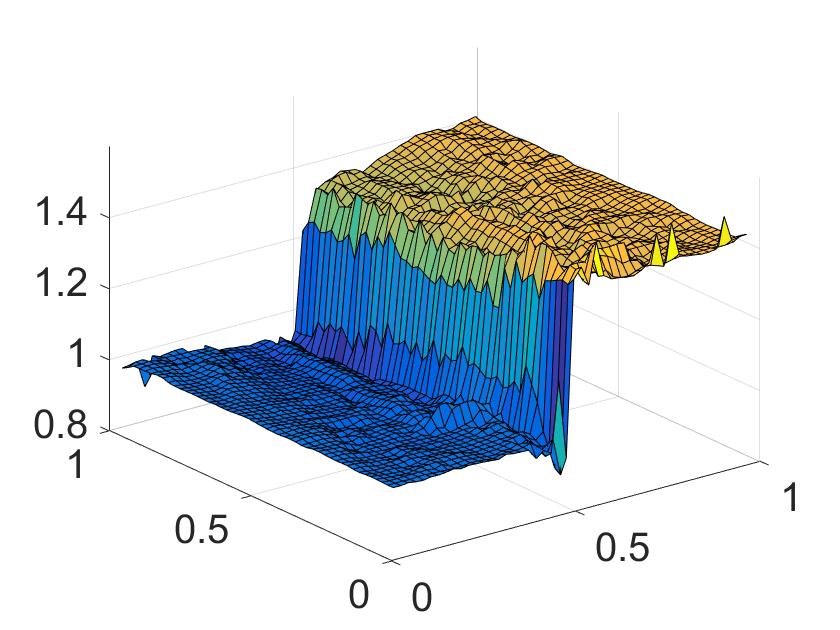} &
\includegraphics[scale=0.12]{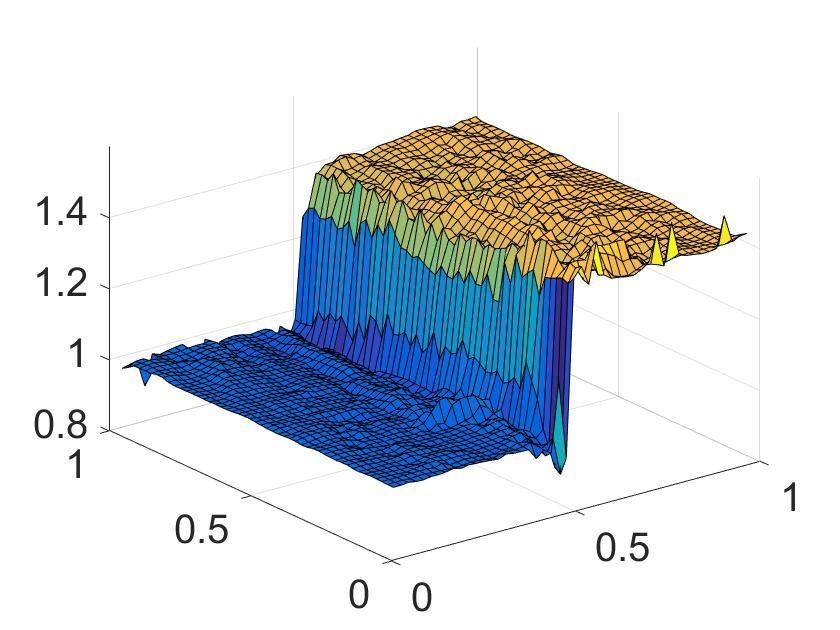} \\
(a) $\va=0.1$ &
(b) $\va=0.06$ &
(c) $\va=0.01$
\end{tabular}
\caption{Example 4. The reconstruction of $\gamma$ from noised measurements when $k_0=2$, $\delta=10\%$ for different $\va$ with relative errors RError=$3.79\times 10^{-2}$, $2.88\times 10^{-2}$ and $2.29\times 10^{-2}$ in (a,b,c), respectively.}
\label{Exp4.2}
\end{figure}

\appendix
\section{}
\label{App}

\begin{lemma}\label{tracelemma}({\bf Traces and integration by parts}  \cite{EGC, PE12} )\\
The trace operator
\bea
\gamma: C^0(\overline{\Omega})\ni v\mapsto \gamma(v):=v|_{\partial\Omega}\in L^2(|\beta\cdot \nu|,\partial\Omega)
\eea
extends continuously to $V$, meaning that there is $C_\gamma$ such that, for all $v\in V$,
\bea
\|\gamma(v)\|_{L^2(|\beta\cdot n|,\partial\Omega)}\le C_\gamma\|v\|_{V}.
\eea
Moreover, the following integration by parts formula holds true: For all $v,w\in V$,
\bea
\int_\Omega\left[(\beta\cdot\nabla v)w+(\beta\cdot\nabla w)v +(\nabla\cdot\beta)vw\right]dx=\int_{\partial\Omega}(\beta\cdot \nu)\gamma(v)\gamma(w)ds.
\eea
\end{lemma}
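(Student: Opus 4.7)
The plan is to prove the lemma by a density argument: first establish the integration by parts formula and the trace estimate for smooth functions, then extend to $V$ by continuity. For $v,w\in C^\infty(\overline{\Omega})$ the IBP formula follows immediately from the classical divergence theorem applied to the vector field $vw\beta$, since
\begin{equation*}
\nabla\cdot(vw\beta)=(\beta\cdot\nabla v)w+v(\beta\cdot\nabla w)+(\nabla\cdot\beta)vw.
\end{equation*}
Taking $w=v$ yields $\int_{\partial\Omega}v^2(\beta\cdot\nu)\,ds = 2\int_\Omega v(\beta\cdot\nabla v)\,dx + \int_\Omega(\nabla\cdot\beta)v^2\,dx$, whose right-hand side is bounded by $C\|v\|_V^2$ because $\beta\in C^{1,1}(\overline{\Omega})$ ensures $\nabla\cdot\beta\in L^\infty(\Omega)$.

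To upgrade from the signed integral $\int_{\partial\Omega}v^2(\beta\cdot\nu)\,ds$ to the full trace norm $\int_{\partial\Omega}v^2|\beta\cdot\nu|\,ds$, I would exploit the well-separated inflow/outflow condition \eqref{inout}. Using the smooth cutoffs $\psi^{\mathrm{in}},\psi^{\mathrm{out}}\in C^\infty(\overline{\Omega})$ already constructed in the proof of Theorem \ref{existenceuniquenessregularized}, set $\phi:=\psi^{\mathrm{out}}-\psi^{\mathrm{in}}$, which is smooth on $\overline{\Omega}$ and satisfies $\phi(\beta\cdot\nu)=|\beta\cdot\nu|$ on $\partial\Omega$ (since $\phi=+1$ on $\Gamma_{\mathrm{out}}$, $\phi=-1$ on $\Gamma_{\mathrm{in}}$, and $\beta\cdot\nu=0$ on $\Gamma_0$). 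Applying the IBP identity with the test function $w=\phi v\in V$ puts $\int_{\partial\Omega}|\beta\cdot\nu|v^2\,ds$ on the right, while the left-hand side expands to $\int_\Omega [2\phi v(\beta\cdot\nabla v) + v^2(\beta\cdot\nabla\phi) + \phi(\nabla\cdot\beta)v^2]\,dx$, which is bounded by $C\|v\|_V^2$ since both $\phi$ and $\beta\cdot\nabla\phi$ lie in $L^\infty(\Omega)$.

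The main analytical step is the density of $C^\infty(\overline{\Omega})$ in $V$. I would follow a standard extension-mollification scheme: extend $v\in V$ to a neighborhood of $\overline{\Omega}$ via local charts straightening the boundary, then set $v_\epsilon=\rho_\epsilon * v$. $L^2$ convergence is immediate, and $\beta\cdot\nabla v_\epsilon\to\beta\cdot\nabla v$ in $L^2$ follows from Friedrichs' commutator lemma, which gives $[\beta\cdot\nabla,\rho_\epsilon *]\to 0$ in operator norm on $L^2$ whenever $\beta$ is Lipschitz. Once density is in hand, the trace operator extends uniquely by continuity from $C^\infty(\overline{\Omega})$ to $V$ with the claimed bound, and the IBP identity extends by passing to the limit in each volume and boundary integral. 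The hardest part I expect is the extension across the characteristic boundary $\Gamma_0$, where $\beta$ is tangential to $\partial\Omega$; the standard remedy is to first translate $v$ slightly in the direction of $\beta$ before mollifying, combined with a partition of unity distinguishing $\Gamma_{\mathrm{in}}$, $\Gamma_{\mathrm{out}}$, and $\Gamma_0$, as detailed in \cite{EGC, PE12}.
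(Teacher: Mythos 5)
Your sketch is correct and is essentially the argument behind the cited result in \cite{EGC,PE12}; the paper itself states this lemma without proof, deferring entirely to those references. The three ingredients you identify --- the divergence-theorem identity for smooth $v,w$, the cutoff $\phi=\psi^{\mathrm{out}}-\psi^{\mathrm{in}}$ exploiting the separation condition \eqref{inout} to pass from the signed integral to $\int_{\partial\Omega}|\beta\cdot\nu|\,v^2\,ds$, and the density of $C^\infty(\overline{\Omega})$ in the graph space via mollification and the Friedrichs commutator lemma --- are exactly the standard route taken there, so there is nothing to compare against in the paper beyond the citation.
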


\begin{theorem}({\bf The Banach-Ne\v{c}as-Babu\v{s}ka Theorem} \cite{R08})\\\label{BNB}
Let $X$ be a Banach space and let $Y$ be a reflexive Banach space. Let $a\in\mathcal{L}(X\times Y,\R)$ and let $f\in Y'$. Then the problem:
\bea
\mbox{Find}\;\; u\in X\;\;\mbox{such that}\;\;a(u,w)=\langle f,w\rangle_{Y',Y}\;\;\mbox{for all}\;\; w\in Y.
\eea
is well-posed if and only if:\\
(i). There is $C_{sta} > 0$ such that
\bean
\label{cond1}
C_{sta}\|v\|_X\le \sup_{0\ne w\in Y}\frac{a(v,w)}{\|w\|_Y},\quad \forall v\in X.
\eean
(ii). For all $w\in X$,
\bean
\label{cond2}
(\forall v\in X, a(v,w)=0)\Longrightarrow (v=0).
\eean
\end{theorem}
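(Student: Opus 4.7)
The plan is to recast the variational problem as an operator equation and then exploit standard functional-analytic tools: the closed range theorem (or a direct Cauchy sequence argument), the Hahn--Banach theorem, and the open mapping theorem. Specifically, I would introduce the bounded linear operator $A:X\to Y'$ defined by $\langle Av,w\rangle_{Y',Y}=a(v,w)$ for all $v\in X$, $w\in Y$. Since $a\in\mathcal L(X\times Y,\R)$, the operator $A$ is bounded with $\|A\|\le\|a\|$. The problem of finding $u\in X$ with $a(u,w)=\langle f,w\rangle$ for all $w\in Y$ is exactly the equation $Au=f$, and well-posedness is equivalent to $A$ being a bijective Banach isomorphism from $X$ onto $Y'$.

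For the direction (i)+(ii)$\Rightarrow$ well-posedness, I would first rewrite (i) as $\|Av\|_{Y'}\ge C_{sta}\|v\|_X$, so that $A$ is bounded below. This gives injectivity immediately, and closedness of $R(A)$ via a routine Cauchy sequence argument in $X$ combined with completeness. Next I would use condition (ii) to show density of $R(A)$ in $Y'$. Indeed, $R(A)$ fails to be dense iff there is a nonzero continuous linear functional on $Y'$ vanishing on $R(A)$; by the reflexivity of $Y$, every such functional is represented by some $w\in Y$ with $\langle Av,w\rangle=a(v,w)=0$ for all $v\in X$, and (ii) forces $w=0$. Thus $R(A)$ is both closed and dense, hence equal to $Y'$, so $A$ is a continuous linear bijection. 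The open mapping theorem then gives $A^{-1}$ bounded, and the inequality from (i) yields the explicit a priori estimate $\|u\|_X\le C_{sta}^{-1}\|f\|_{Y'}$.

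For the converse, well-posedness gives a bounded bijection $A:X\to Y'$ with continuous inverse. Then $\|v\|_X=\|A^{-1}Av\|_X\le\|A^{-1}\|\,\|Av\|_{Y'}$, which, by definition of the dual norm, is exactly (i) with $C_{sta}=\|A^{-1}\|^{-1}$. Finally, suppose $w\in X$ (read $Y$) satisfies $a(v,w)=0$ for all $v\in X$; since $A$ is surjective, every $\varphi\in Y'$ equals $Av$ for some $v$, so $\langle \varphi,w\rangle=0$ for all $\varphi\in Y'$, and Hahn--Banach (nondegeneracy of the duality pairing) gives $w=0$, i.e.\ (ii).

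The main subtle point I anticipate is the reflexivity hypothesis on $Y$: it is used precisely in the density step of the backward implication, to identify the annihilator of $R(A)\subset Y'$ inside $(Y')^*$ with $Y$ itself. Without reflexivity, condition (ii) only guarantees density of $R(A)$ with respect to the weak-$*$ topology induced by $Y$, which is strictly weaker than norm-density in $Y'$, and the closed-plus-dense argument collapses. Once this point is handled, the rest of the proof is a textbook combination of the closed graph/open mapping theorem and Hahn--Banach separation; no estimate specific to the transport equation enters.
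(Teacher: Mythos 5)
Your proof is correct, but there is nothing in the paper to compare it against: Theorem~\ref{BNB} is quoted as a known result from the cited reference \cite{R08} and is used as a black box in the proof of Theorem~\ref{existenceuniquenessregularized}; the paper supplies no proof of its own. Your argument is the standard one from the literature: encode the form as a bounded operator $A:X\to Y'$ via $\langle Av,w\rangle_{Y',Y}=a(v,w)$, read condition (i) as $\|Av\|_{Y'}\ge C_{sta}\|v\|_X$ (hence injectivity and, by a Cauchy-sequence argument in the complete space $X$, closed range), use condition (ii) together with reflexivity of $Y$ and Hahn--Banach to show $R(A)$ is dense, conclude $R(A)=Y'$ and invoke the open mapping theorem (or directly the lower bound) for the stability estimate $\|u\|_X\le C_{sta}^{-1}\|f\|_{Y'}$; the converse direction via surjectivity of $A$ and nondegeneracy of the duality pairing is also standard and correct. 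Two further points in your favor: you correctly read past the typos in the statement of (ii), where the quantifier should range over $w\in Y$ and the conclusion should be $w=0$ rather than $v=0$; and your identification of the exact role of reflexivity --- representing a functional on $Y'$ that annihilates $R(A)$ by an element of $Y$, which fails in general since a norm-closed, weak-$*$ dense subspace of a dual space can be proper --- is precisely the point where the hypothesis is needed.
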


\begin{proof}[Lemma~\ref{Lemmaunique}] Further $c>0$ is a constant that only depends on $g$,  $\sigma$, $\Omega$, and eventually on $\eta$. Since $u_\sigma$ is  a solution to the system~\eqref{cond},  $|\nabla u_\sigma|$ is  a Mukenhoupt weight.
Indeed there exists a contant $p>1 $ depending only 
on $g$,  $\sigma$ and $\Omega$ such that  the following inequality (Theorem 1.1 in  \cite{GL1})
\bean \label{u1}
\left(\frac{1}{|B_r(x)|} \int_{B_r(x)} |\nabla u_\sigma| dy\right)
\left(\frac{1}{|B_r(x)|} \int_{B_r(x)} |\nabla u_\sigma|^{-\frac{1}{p-1}} dy\right)^{p-1} \leq c,
\eean
holds for all $r\in (0, \eta)$, and $x\in \Omega_\eta$. 

The following behavior is related to the unique continuation properties of  solutions
to elliptic equations in a divergence form (Corollary 3.1 in \cite{CT}).
\bean \label{u2}
 cr^\beta \leq \frac{1}{|B_r(x)|} \int_{B_r}|\nabla u_\sigma| dy,
\eean
where $\beta \geq 0$ is a constant that only depends on $g$,  $\sigma$ and $\Omega$. The constants $\beta$ and $p>1$ are  related to the vanishing order of $\nabla u_\sigma$ in $\Omega_\eta$. Combining inequalities \eqref{u1} and \eqref{u2}, we obtain
\bean \label{u3}
 \int_{B_r(x)} |\nabla u_\sigma|^{-\frac{1}{p-1}} dy \leq cr^{\frac{-\beta+n}{p-1}},
\eean
for all $r\in (0, \eta)$, and $x\in \Omega_\eta$. 

Fix now $r= \frac{\eta}{2}$. There exist $N\in \mathbb N$ and $x_j, \,  1\leq j \leq N,$  that only depend on $\eta$ and
$\Omega$ such that  $ \Omega_\eta \subset  \cup_{j=1}^N B_{r}(x_j)$. Let $(\phi_j)_{1\leq j \leq N}$ be a partition of unity subordinate to the covering $\cup_{j=1}^N B_{r}(x_j)$. We deduce from  \eqref{u3}, the following estimates
\bean \label{u4}
\int_{\Omega_\eta}  |\nabla u_\sigma|^{-\frac{1}{p-1}} dy  &=& \sum_{j=1}^N
\int_{\Omega_\eta}  |\nabla u_\sigma|^{-\frac{1}{p-1}} \phi_j dy \nonumber\\
&\le& \sum_{j=1}^N \|\phi_j\|_{L^\infty(\Omega_\eta)} \int_{B_{r}(x_j)}  |\nabla u_\sigma|^{-\frac{1}{p-1}} dy \nonumber\\
&\le&  c.
\eean

Recall $\Omega_\eta\setminus \overline{\Omega^t_\eta} = \left\{x\in \Omega_\eta:  \,  |\nabla u_\sigma| <t \right\}$. Assuming that $\Omega_\eta\setminus \overline{\Omega^t_\eta}$ is not empty, we infer from \eqref{u4}   the following inequality
\bea
t^{-\frac{1}{p-1}} |\Omega_\eta\setminus \overline{\Omega^t_\eta}|
\leq  \int_{\Omega_\eta\setminus \overline{\Omega^t_\eta}}  |\nabla u_\sigma|^{-\frac{1}{p-1}} dy \leq \int_{\Omega_\eta
} |\nabla u_\sigma|^{-\frac{1}{p-1}} dy \leq c,
\eea
which in turn leads to
\bea
0 \leq |\Omega_\eta\setminus \overline{\Omega^t_\eta}| \leq c t^{\alpha},
\eea
with $\alpha:= \frac{1}{p-1}>0$.
\end{proof}

\bibliographystyle{siamplain}
\bibliography{references}
\end{document}